\def\@email#1#2{%
 \endgroup
 \patchcmd{\titleblock@produce}
  {\frontmatter@RRAPformat}
  {\frontmatter@RRAPformat{\produce@RRAP{*#1\href{mailto:#2}{#2}}}\frontmatter@RRAPformat}
  {}{}
}%
\newcommand{\revision}[1]{{\color{black} #1}}
\renewcommand{\epsilon}{\varepsilon}
\newcommand{\e}{\varepsilon}
\newcommand{\R}{\mathbb{R}}
\newcommand{\N}{\mathbb{N}}
\newcommand{\cX}{\mathcal{X}}
\newtheorem{definition}{Definition}[section]
\newtheorem{lemma}{Lemma}[section]
\begin{document}

\preprint{AIP/123-QED}

\title[Detecting bifurcations in dynamical systems with CROCKER plots]{Detecting bifurcations in dynamical systems with CROCKER plots}

\author{\.{I}smail G\"{u}zel}
\email{iguzel@itu.edu.tr}
\homepage{https://ismailguzel.github.io/}
\affiliation{Department of Mathematics Engineering, \.{I}stanbul Technical University, \revision{Maslak, \.{I}stanbul 34469, T\"{u}rkiye}}%

\author{Elizabeth Munch}%
\email{muncheli@msu.edu}
\homepage{https://elizabethmunch.com/}
\affiliation{ 
Department of Computational Mathematics, Science and Engineering, Department of Mathematics, Michigan State University, \revision{East Lansing, Michigan 48824, USA.}%
}%

\author{Firas A.~Khasawneh}
\email{khasawn3@msu.edu}
\homepage{https://firaskhasawneh.com/}
\affiliation{%
Mechanical Engineering, Michigan State University, \revision{East Lansing, Michigan 48824, USA.}%
}%

\date{\today}%

\begin{abstract}
Existing tools for bifurcation detection from signals of dynamical systems typically are either limited to a special class of systems, or they require carefully chosen input parameters, and significant expertise to interpret the results. Therefore, we describe an alternative method based on persistent homology---a tool from Topological Data Analysis (TDA)---that utilizes Betti numbers and CROCKER plots. 
Betti numbers are topological invariants of topological spaces, while the CROCKER plot is a coarsened but easy to visualize data representation of a one-parameter varying family of persistence barcodes. 
The specific bifurcations we investigate are transitions from periodic to chaotic behavior or vice versa in a one-parameter \revision{collection} of differential equations.
We validate our methods using numerical experiments on ten dynamical systems 
and contrast the results with existing tools that use the maximum Lyapunov exponent. 
We further prove the relationship between the Wasserstein distance to the empty diagram and the norm of the Betti vector, which shows that an even more simplified version of the information has the potential to provide insight into the bifurcation parameter.
The results show that our approach reveals more information about the shape of the periodic attractor than standard tools, and it has more favorable computational time in comparison to the R\"{o}senstein algorithm \revision{for computing the maximum Lyapunov exponent.}  

\end{abstract}

\maketitle

\begin{quotation}
Qualitative changes in the system response such as transitions from regular to chaotic behavior are referred to as bifurcations. 
When the system is modeled by differential equations with a large number of degrees of freedom, or if all that is available is a time series from a system observable, detecting these bifurcations becomes challenging.
Therefore, we focus on a data-driven approach for bifurcation detection where we measure properties of embedded time series to determine if and when the behavior of the system has changed.
The tools used come from topological data analysis (TDA), where shape is quantified using tools from algebraic topology. 
The contribution of this paper is to show how the CROCKER plot \cite{crocker-Lori2015}, a visual tool for analysis of changing shape, can be used for the purpose of analyzing and detecting bifurcations. 
Further, compressed versions of this information by taking the norm over columns of the resulting matrix can also be associated to changes in the system. 
The analysis is performed by testing the ideas empirically on ten dynamical systems and providing comparisons to standard bifurcation analysis tools.

\end{quotation}

\section{Introduction}

Systems expressing chaotic behavior can be found in a variety of domains, ranging from mathematics to biology, economics to electrical circuits, and engineering to social sciences. 
When considering chaotic systems in terms of engineering applications, the major issue is determining how to control complexity and unpredictability. 
As the number of features in established systems grows, the system becomes more complicated, making modeling the system more difficult. 
Further, since the definition of bifurcation is quite qualitative, we are left to either qualitative measurements, or turn to other quantitative measures which tend to be computationally expensive options for analysis. 
In this paper, we propose a method for investigating bifurcations in dynamical systems using a tool from topological data analysis (TDA) called the CROCKER plot \cite{crocker-Lori2015}. 

There is a growing literature dedicated to connecting dynamical systems analysis with TDA to create new data-driven analysis tools, a field collectively known as Topological Signal Processing (TSP) \cite{robinson2014topological}. 
The original seeds of TDA as a field were planted through connections with dynamical systems \cite{Robins2000,Kaczynski2004}, so it is no surprise that these ideas can be quite useful for time series analysis.
Much of the recent work utilizes \textit{persistent homology}, colloquially known as persistence, which encodes the shape and structure of given input data by providing measurements of features such as clusters and holes.
The standard and highly utilized pipeline for time series data
\cite{khasawneh2017utilizing,Garland2016,mittal2017topological,TACTS,Guzel2022stochastic,Karan2021,maletic2016persistent,Marchese2017,Kim2018c,Seversky2016,Venkataraman2016,Perea2014,Perea2019a,Berwald2014a,Tan2021} 
is as follows. 
Take a time series of interest, generate a point cloud from it using a delay coordinate embedding, create a filtration of the complete simplicial complex using the Rips construct, and compute the persistent homology of the result. 
From here, the analysis tools depend on the types of input data and conclusions desired, but nearly all involve some form of featurization of the resulting persistence diagrams.
This process, which returns some form a vector representing the persistence diagram, is required as the unusual geometry of the space of diagrams \cite{Bubenik2020a} results in limitations when passing the information as inputs to machine learning algorithms. 
This pipeline has found a great deal of success in both theory and applications, including 
machining \cite{Yesilli2019b, Khasawneh2015, Khasawneh2018},
finance \cite{Gidea2017a, Gidea2018a, Gidea2020, RiveraCastro2019,Majumdar2020},
and biomedicine \cite{Ignacio2019a,Stolz2017,Majumder2020,Chung2021,Emrani2014}. 
More recently, work has begun to modify aspects of this pipeline \cite{tymochko2020using,myers2019persistent,Myers2022,Bauer2020b,Tempelman2020,Khasawneh2018}
or to accept different forms of data input than simple time series 
\cite{Myers2022a,Tralie2016a,Tymochko2020,Tralie2017,Levanger2019,Wu2021}.

In this paper, we focus on the case of time series data associated to a bifurcation parameter.
That is, a parameter which can be changed in the system resulting in different types of output behavior. 
With a collection of time series, each associated to this parameter value, we can employ tools from TDA which are built to handle a 1-parameter family of persistence diagrams. 
While this collection of methods includes 
multiparameter persistence \cite{multiparameterrank},
and vineyards \cite{CohenSteiner2006}, 
we will focus on the CROCKER plot \cite{crocker-Lori2015} due to its simple visualization. 
The idea is that for each value of the bifurcation parameter, there is a persistence diagram computed from the time series via the normal pipeline, but this can instead be encoded via its simpler Betti curve. 
These Betti curves, which are vectorized into columns, can be collected into a matrix to easily visualize changes in the persistence diagram structure, which can then be interpreted against the original bifurcation parameter for analysis of the system. 

The result, called a CROCKER plot, was originally developed in the context of dynamic metric spaces \cite{crocker-Lori2015,crocker-Lori2019,bhaskar2019analyzing,crocker-stack}.
{There, the  additional restrictions} means that the time-varying point clouds under study have labels on vertices from one parameter value to the next, allowing for more available theoretical results on continuity. 
However, the visualization tool itself needs no such assumption, as we have no tracking information on our point clouds from one step to the next. 

In this paper, we show that the CROCKER plot can be used as both a qualitative and quantitative tool for understanding bifurcations in dynamical systems. 
We will also begin to look at an even more simplified version of the information, namely the $L_1$ norm of the Betti curves. 
As we show that this construction is closely related to the 1-Wasserstein distance commonly used for persistence diagrams and make connections between this and the maximum Lyapunov exponent, a commonly used measure for chaos. 
This paper is the full version of our short conference paper \cite{Guezel2022}.

\paragraph*{Outline.} 
We organize this paper as follows. 
In Sec.~\ref{sec:background} we give the necessary background on dynamical systems, persistent homology, and CROCKER plots. 
In Sec.~\ref{sec:Experiments}, we give specifics of the method, and show the results with full details on the Lorenz and R\"{o}ssler systems, with further results shown for a longer list of example dynamical systems. 
Finally, we discuss conclusions, limitations, and future work in Sec.~\ref{sec:Conclusions}.

\section{Background}
\label{sec:background}

In this section, we give the necessary background for the CROCKER based analysis of dynamical systems. 
In particular, we discuss needed dynamical systems background in Sec.~\ref{ssec:DynamicalSystems}; and the topological tools we use in Sec.~\ref{ssec:TDA}.

\subsection{Dynamical systems}
\label{ssec:DynamicalSystems}

Throughout this work, we assume that we have access to sampled realizations of a dynamical system $X = [x_1,\cdots,x_N]$ where $x_i \in \R^n$ and have the goal of analyzing changes in the behavior in a data-driven manner. 
In this paper, we use two main tools from the dynamics literature for analyzing the dynamics of a non-linear system. 
Namely, we first use the bifurcation diagram which is a qualitative measure
and Lyapunov exponent which is a quantitative measure.
As these tools are standard in nonlinear time series analysis, we direct the interested reader to texts\cite{Kantz2004,Hirsch2003,baker1996chaotic, sayama2015introduction} for further details.

\subsubsection{Bifurcation Diagrams}

A bifurcation in a dynamical system is characterized by tiny changes in parameter values causing {qualitative} changes in the behavior. 
These result in {qualitative} changes in the system's equilibrium point or the unstable state of stable equilibrium points. 
Hence, detecting them is critical since they can indicate when a system is transitioning from normal operation to imminent breakdown. 
Bifurcations can occur in both continuous and discrete time systems, and we call the system parameter that produces the bifurcation event the \textit{bifurcation parameter}.
One visual tool for finding bifurcations is the bifurcation diagram, which shows local extrema of a given system over a varying control parameter while keeping other parameters fixed.

An example of this can be seen in Fig.~\ref{fig:LorenzBifurcLyapunov}(a) for the commonly studied Lorenz system \cite{lorenz1963deterministic}. 
This system is given by the equations
    \begin{equation*}
        \dot{x} = \sigma(y-x),\quad
        \dot{y} = x(\rho-z)-y,\quad
        \dot{z} = xy-\beta z,
    \end{equation*}
where the constants $ \sigma, \rho, \beta $ are system parameters, $x$ is proportional to the rate of convection, and $y$ and $z$ are the horizontal and vertical temperature variation, respectively. 
In the example of Fig.~\ref{fig:LorenzBifurcLyapunov}(a), we show the bifurcation diagram with respect to varying the parameter $\rho$ with 600 equally spaced values between 90 and 105; and with the $\sigma$ and $\beta$ parameters remaining constant.   
The attractor for four marked values of $\rho$ can be seen above the bifurcation diagram in Fig.~\ref{fig:LorenzBifurcLyapunov}.
\revision{The Lorenz system was simulated using \texttt{DynamicalSystems.jl} ~\cite{juliaDynamicalSystems} package in julia~\cite{bezanson2017julia} with default function parameters.}
As we see from Fig.~\ref{fig:LorenzBifurcLyapunov}, there are two obvious periodic regions around $\rho = 92.1$ and $\rho = 100.1$.
However, a limitation of bifurcation diagrams is their inherently qualitative nature, meaning further tools are required for understanding bifurcations.

    \begin{figure*}
        \centering
        \includegraphics[width=\linewidth]{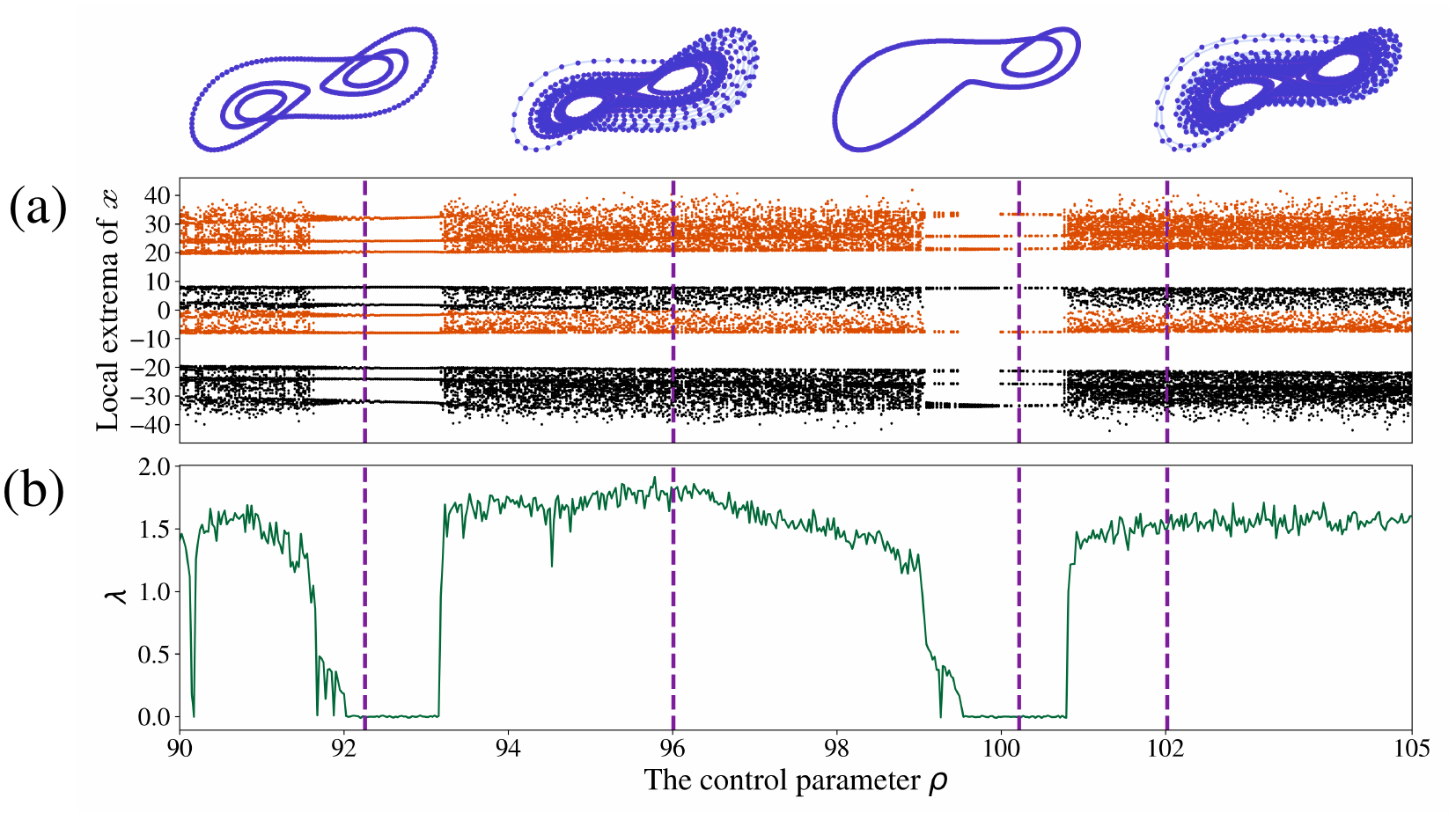}
        \caption{The bifurcation diagram (a) and  the maximum Lyapunov exponent (b) for the Lorenz system with varying 600 $\rho$ parameters between 90 and 105 and $\sigma = 10,\: \beta = 8/3$. 
        On the bifurcation diagram, the \revision{orange} scatter plot represents local maxima while \revision{black} represents local minima of the variable $x$.
        }
        \label{fig:LorenzBifurcLyapunov}
    \end{figure*}

\subsubsection{The maximum Lyapunov exponents}
The second tool we use to investigate the behavior of a system, in this case, a quantitative measure,  is called the maximum Lyapunov exponent. 
{It can be described} as the mean rate of exponential divergence or convergence of two neighboring beginning points in the phase space of a dynamical system. 
Specifically, the quantity is defined as follows. 

\begin{definition}
Given a dynamical system $\dot{Z}=F(Z)$ with $n-$dimensional phase space, we consider two neighboring trajectories $Z(t)$ and $Z(t)+\delta(t)$, where $\delta(t)$ is a vector with infinitesimal initial length. 
The maximum Lyapunov exponent of the system is a number $\lambda$, if it exists, such that $||\delta(t)||\thickapprox||\delta(0)||e^{\lambda t}$.
Specifically,
\begin{equation*}
\lambda = \lim_{t\to\infty} \frac{1}{t}\log \frac{\|\delta(t)\|}{\|\delta(0)\|},
\end{equation*}
where $\|\cdot \|$ is the Euclidean norm. 
Moreover, the system is called chaotic if $\lambda>0$, periodic if $\lambda=0$ and stable if $\lambda <0$. 

\end{definition}

\revision{
We use the maximum Lyapunov exponent as our ground truth measurement for whether a dynamical system is chaotic, periodic, or stable at a given parameter value, and make use of two different algorithms for its computation. 
First, we use the Benettin algorithm~\cite{benettin1980lyapunov} to compute the maximum Lyapunov exponent in the case that we are assumed to have a model of the dynamical system. 
In particular, analytical calculations (when possible) are computationally expensive since they require the Jacobian matrix.

However, as is often the case in practice, when the model of the dynamical system is not available, we turn to numerical approximations. 
In our experiments, the maximum Lyapunov exponent is numerically computed  using Rösenstein's algorithm~\cite{rosenstein1993practical}. 
In this algorithm, the maximum Lyapunov exponent is calculated using the slopes of the average divergence curves that result from linear fits of a scaling region. 
However, human input is required for the results of this procedure to be a reliable approximation for the true Lyapunov exponent. 
That being said, this algorithm is still commonly used in practice and thus it is useful to explore the relationships between the Rösenstein results and the TDA methods introduced here.
The experiments described in Sec.~\ref{sec:Experiments} involve far too many time series to be able to perform detailed human checking, so we only use these results as an approximation and show further information on the challenges and quality of approximation for our experiments in Appx.~\ref{Appx:DifferentLyapAlgo}. 
}

\revision{
Returning to the example of Fig.~\ref{fig:LorenzBifurcLyapunov}(b), we see the maximum Lyapunov exponent, labeled $\lambda$, on the bottom of the figure. 
We calculated this maximum Lyapunov exponent from the model by using Benettin algorithm implementation in \texttt{DynamicalSystems.jl}. 
Note that there are still errors in the Benettin algorithm, since, for example, in the regions around 92.1 and 100.1, $\lambda$ approaches 0, but due to approximation errors does not stay at exactly 0. 
}
Further, we can see that regions around these parameter values sometimes have non-zero Lyapunov exponent despite having visually periodic behavior in the bifurcation diagram. 
We will further investigate this situation in the experiment section, Sec.~\ref{sec:Experiments}.

\subsection{Topological Data Analysis}
\label{ssec:TDA}

A modern tool for measuring the shape of data, particularly point cloud data of the form we have sampled for a given dynamical system $X = [x_1,\cdots,x_N]$, is that of persistent homology and related ideas. 
This construction comes from the field of topological data analysis \cite{Dey2021,Oudot2017,Munch2017}, and has many variants, including Betti curves, persistence diagrams, and persistence barcodes. 
In this paper, we will focus on the CROCKER plot \cite{crocker-Lori2015}, which is a relative of persistence in the case of a parameter stream of data, and provide the background needed for its definition.

\subsubsection{Persistent homology}
 
The idea behind persistent homology is that we can encode information about a fixed shape using homology \cite{Hatcher, Munkres2}, and extend this idea to develop a filtration of shapes where we can measure the changing homology. 
In this section, we give a view of the basics but direct the interested reader to relevant texts \cite{Dey2021,Oudot2017} for further specifics. 

The data we study in this work are point clouds, i.e.~a finite discrete sets of points $X \subset \R^{N}$.
We can develop a filtration of simplicial complexes based on this point cloud using the Vietoris-Rips complex, defined as follows.

\begin{definition}
    Given a point cloud $ X $, the \emph{Vietoris-Rips complex} is defined to be the simplicial complex  whose simplices are built on vertices that are at most $\varepsilon$  apart,
	\[ 
	R_{\varepsilon}(X) = \{\sigma \subset X\mid d(x,y)\leq \varepsilon, \text{ for all }
	x,y\in \sigma\}. 
	\]
	When $X$ is clear from context, we denote the complex simply by $R_\e$. 
\end{definition}
For a fixed $\e$, we get a fixed simplicial complex $R_\e(X)$. 
These complexes have the additional property that for $\e \leq \e'$, $R_\e(X) \subseteq R_{\e'}(X)$. 
The resulting collection of Vietoris-Rips complexes for a sequence of proximity parameters, $ \epsilon_0 \leq \epsilon_1\leq \cdots \leq \e_s $, is the filtration
\begin{equation*}
	R_{\epsilon_0} \subseteq R_{\epsilon_1} \subseteq \cdots \subseteq R_{\e_s}.
\end{equation*}
An example of this construction can be seen in Fig.~\ref{fig:RipsVarying}(a-d).

	\begin{figure}%
    \centering
    \includegraphics[width=\linewidth]{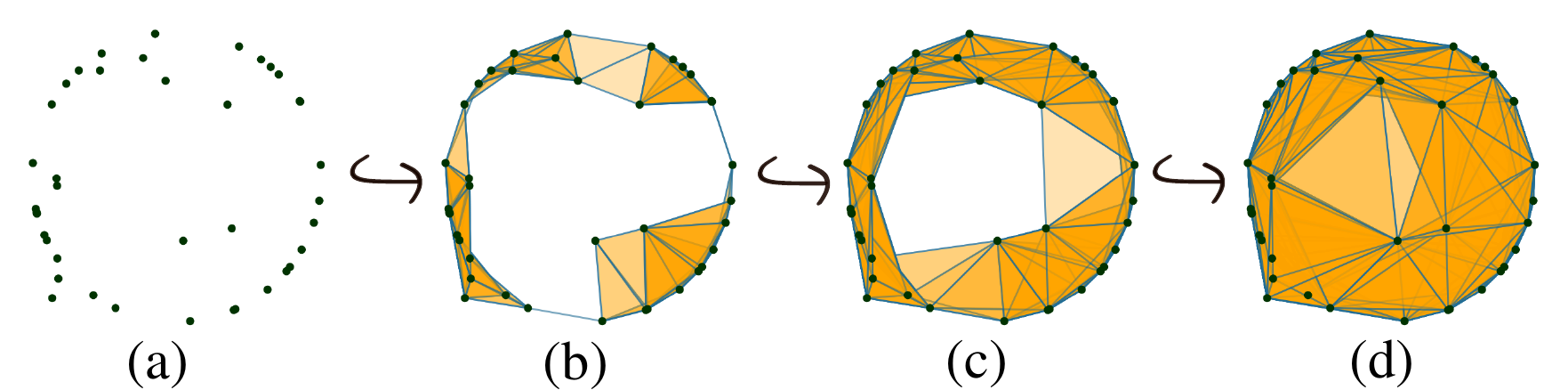}
    \caption{The Vietoris-Rips complex for four choices of an increasing proximity parameter $\varepsilon$. 
    The 1-dimensional Betti numbers, which count the number of loops in each complex, are 0, 1, 1 and 0 in order from left (a) to right (d).
    }\label{fig:RipsVarying}
    \end{figure}   	
To measure the changing shape, we turn to homology.  
For a given simplicial complex $K$, the $k$-dimensional homology $H_k(K)$ is a vector space representing $k$-dimensional structure in the complex. 
The lowest dimensions come with the most intuition; namely that 0-dimensional homology measures connected components, 1-dimensional homology measures loops, and 2-dimensional homology measures voids. 
We direct the interested reader to classical texts\cite{Hatcher, Munkres2} for the full specifics on homology. 
For our purposes, the important information is that the basis of the vector space $H_k(K)$ has one element per $k$-dimensional structure. 
The dimension of $H_k(K)$ is called the $k^{\text{th}}$ Betti number, and will play an important role in the next section.
So, in the case of 1-dimensional homology in the example of Fig.~\ref{fig:RipsVarying}, the $1^{\text{st}}$ Betti numbers for each simplicial complex shown are 0, 1, 1, and 0, respectively as only the second and third complexes, (b) and (c) have a loop.  

Of course, in the example of Fig.~\ref{fig:RipsVarying}, we can see that for certain choices of $\e$ proximity parameters, the structure of the Vietoris-Rips complex $R_\e(X)$ is a good approximation for the structure seen in the point cloud; namely that the points appear to be sampled from a circle. 
However \textit{a priori},  we have no working knowledge to determine the right choice of $\e$ in advance, especially when we happen to not be working with a point cloud in low dimensions like $n=2$. 
To this end, we turn to persistent homology, which encodes information about all values of $\e$ at once, allowing us to measure the durability of topological features over the changing parameter.
    
The additional tool we use is that the vector spaces $H_k(R_\e(X))$ also come with linear maps induced by the inclusions $R_\e(X) \hookrightarrow R_{\e'}(X)$. 
While we again leave specifics to classical texts \cite{Hatcher, Munkres2}, the result is a collection of vector spaces and maps 
\begin{equation*}
 H_{k}(R_{\epsilon_0}) \rightarrow H_{k}(R_{\epsilon_1}) \rightarrow \cdots \rightarrow H_{k}(R_{\e_s})
\end{equation*}
called a persistence module. 

Further results in algebra mean that the persistence module can be decomposed into interval modules, which effectively give information about when features measured by homology appear and disappear over the module. 
Details can be found in \cite{Oudot2017}. 
\revision{This data can be uniquely represented through a collection of pairs $(\e_{b},\e_{d})$ in a decomposition of the module, where each pair represents the parameter values for which a homological feature appeared at time $b$ and disappeared at time $d$.} 
To visualize this information, we turn to the persistence diagram, where an example  can be seen in Fig.~\ref{fig:bettibarcode}. %

\begin{figure*}%
    	\centering
    	\includegraphics[width=\linewidth]{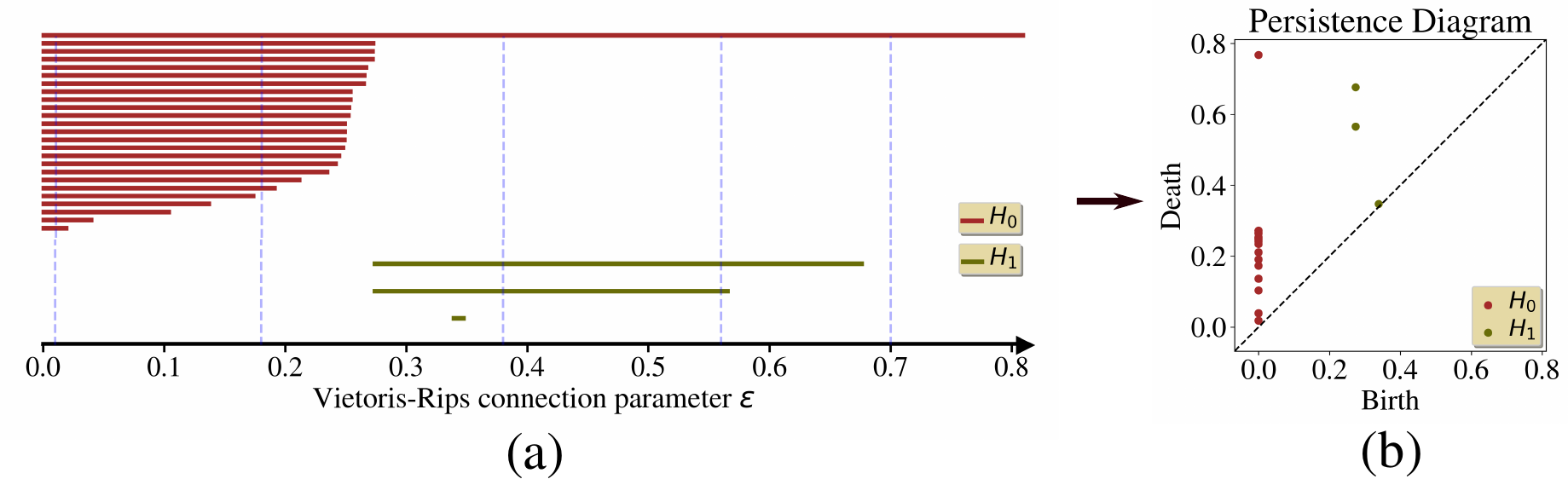}
    	\caption{The persistence barcode with Vietoris-Rips complex for chosen epsilon parameters (a) and corresponding persistence diagram (b). The Vietoris-Rips filtration parameters $[0.01, 0.18, 0.38, 0.56, 0.7]$ and 1-dimensional Betti numbers corresponding to given filtration parameter $\varepsilon$ is $[0, 0, 2, 2, 0]$ .
    	}\label{fig:bettibarcode}
\end{figure*}
	
\begin{definition}
    A \emph{persistence diagram} is a finite collection of off-diagonal points
    $$D = \{(b,d) \in \mathbb{R}^2 \mid b< d\}$$
    where $b$ and $d$ are the  birth and death time of a feature in the persistence module. 
    We call with $d-b$ the \emph{lifetime} of the feature. 
    
    The \emph{empty diagram} is the diagram with no off-diagonal points, $D_{\emptyset} = \{ \}.$
\end{definition}

A persistence barcode encodes the same information as the persistence diagram, but instead, is a collection of horizontal line segments as in Fig.~\ref{fig:bettibarcode}. 
We place the homology generators on the vertical axis (where order does not matter) whereas the horizontal axis represents the life span of each homology class in terms of the parameter $ \varepsilon $. 
When we draw the vertical line at a particular $ \varepsilon_i $, the number of intersecting line segments in barcodes is the dimension of the corresponding homology group, i.e.~the Betti number for that parameter $\varepsilon_i$.  
In Fig. \ref{fig:bettibarcode}, one can see barcodes together with the Vietoris-Rips complex corresponding to several choices of  $ \varepsilon $.

One of the reasons these persistence diagrams are so useful is the availability of metrics for their comparison. 
We focus here on the family of Wasserstein distances for persistence diagram, which effectively measure how easily the points in two diagrams can be matched up. 
To handle diagrams of different sizes, we allow for unmatched points in the two diagrams. 

\begin{definition}\label{defn:Wasserstein}
    A \emph{partial matching} between two diagrams $D_1$ and $D_2$ is a bijection $\mathbb{M}: S_1 \rightarrow S_2$
    where $S_1\subset D_1$ and $S_2\subset D_2$. 
    Two points $\alpha_1\in D_1$ and $\alpha_2\in D_2$ are called matched when $\alpha_2 = \mathbb{M}(\alpha_1)$ while an element $\alpha_1\in (D_1\backslash S_1) \cup (D_2\backslash S_2) =: U_{\mathbb{M}}$ is called unmatched.
    The \emph{Wasserstein distance} between a pair of diagrams $ D_1$ and $ D_2$ is defined as 
    {\small\begin{align*}%
     W_{q}(D_1, D_2)=\inf _{\mathbb{M}}\left(\sum_{(\alpha_1,\alpha_2) \in \mathbb{M}}\|\alpha_1-\alpha_2\|_{\infty}^{q}  + \frac{1}{2^q}\sum_{\alpha \in U_{\mathbb{M}} }|\alpha_d-\alpha_b|^{q}  \right)^{1 / q},  
    \end{align*}}%
    where {the infimum is taken over} all possible partial matches $\mathbb{M} \subset D_1\times D_2$.
\end{definition}
 See the example of Fig.~\ref{fig:Matching}, where two diagrams are overlaid. 
 {In Fig.~\ref{fig:Matching}(b)}, a poor choice of matching results in a high matching cost, while the matching shown at right Fig.~\ref{fig:Matching}(c) achieves the infimum and thus the Wasserstein distance between the diagrams.

	\begin{figure}%
		\centering
		\includegraphics[width=\linewidth]{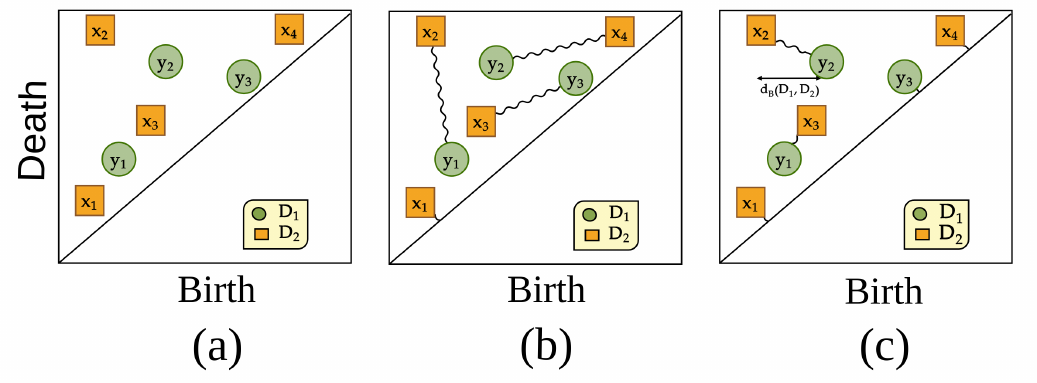}
		\caption{Consider several choices of matchings used in Defn.~ \ref{defn:Wasserstein} for two diagrams $D_1$ and $D_2$ shown overlaid in (a). 
		One option for a matching is shown in (b), but the best matching based on the cost is given in (c). }
		\label{fig:Matching}
	\end{figure}

\subsubsection{Betti curves and Betti vectors}

Despite losing some information in the process, another useful representation of the changing shape is to simply watch the changing rank of the homology rather than tracking the full homology along the filtration. 
Recall that the $k^{\text{th}}$ Betti number is the dimension of the $k^{\text{th}}$ homology group, denoted $\beta_k(K) = \dim(H_k(K))$. 
Then in our setting, the Betti curve is the function encoding the Betti number for each choice of $\e$ proximity parameter in the Vietoris-Rips complex.

\begin{definition}{\label{defn:Betticurve}}
Fix a point cloud $X$ and denote the Vietoris-Rips complex at proximity parameter $\e$ by $R_\e(X)$. 
Then the Betti curve is a function  
\begin{equation*}
\begin{matrix}
B_X: & \R & \longrightarrow  & \N\\
 & \e & \longmapsto & \beta_k(R_{\epsilon}(X)). 
\end{matrix}
\end{equation*}

\end{definition}

We further have a discretized version of the Betti curve, called the Betti vector, which encodes the information from the Betti curve at a fixed sequence of $\e$ values. 

\begin{definition}{\label{defn:Bettivector}}
    Let $P=\{\epsilon_0, \epsilon_1,\dots,\epsilon_{s}\}$ be a partition of the interval $(\epsilon_0=0, \epsilon_{s})$. 
    The $k$-dimensional \emph{Betti vector} is defined as the ordered sequence of the $k$-dimensional Betti numbers, that is
    $$
    Bv_k(X;P) = 
    (\beta_k(R_{\epsilon_0}),\beta_k(R_{\epsilon_1}),\dots, \beta_k(R_{\e_s})).
    $$
\end{definition}

One particularly useful property of the Betti curve viewpoint over the persistence diagram is that we have access to a norm.  
In particular, for any $ p\in(0,\infty] $, the $ L_p $ norm of a real-valued function $ f $ is  
    \[ ||f||_p = \left( \int |f(x)|^p dx \right)^{1/p}. \]

Even though it is known that the Betti curves are unstable~\cite{johnson2021instability, chung2022persistence}, there is a close relationship between the norm of the Betti curve and the Wasserstein distance between a given diagram and the empty diagram. 
Specifically, this is encompassed in the following proposition which is proved in Appendix~\ref{App:BettiNorm}.

\begin{restatable}{proposition}{RelCurveWasser}
\label{prop:RelCurveWasser}
Let $D$ be a persistence diagram with maximum death time $d_{max}$, and $\mathbb{I} = (0,d_{max})\subset\mathbb{R}$. Then, the link between Betti curves and Wasserstein distance is given by  
        \[ 
        \int_{\mathbb{I}} B_{D}(s)\ ds = 2 \cdot W_1(D,D_{\emptyset})  . 
        \]
\end{restatable}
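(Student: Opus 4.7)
The plan is to reduce both sides to the total persistence $\sum_{(b,d)\in D}(d-b)$ of the diagram, via two essentially independent computations, and then compare.

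First I would rewrite the integrand on the left. By Defn.~\ref{defn:Betticurve}, $B_D(s)$ records the dimension of the homology at parameter $s$, which (by the decomposition of the persistence module into interval summands) counts exactly those points $(b,d)\in D$ whose lifetime interval contains $s$. Thus
\begin{equation*}
B_D(s) \;=\; \sum_{(b,d)\in D} \mathds{1}_{[b,d]}(s).
\end{equation*}
Since $D$ is finite and every birth $b\ge 0$ and every death $d\le d_{\max}$, I can swap sum and integral (Fubini/finite linearity) to obtain
\begin{equation*}
\int_{\mathbb{I}} B_D(s)\,ds \;=\; \sum_{(b,d)\in D}\int_0^{d_{\max}}\mathds{1}_{[b,d]}(s)\,ds \;=\; \sum_{(b,d)\in D}(d-b).
\end{equation*}

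Next I would compute $W_1(D,D_{\emptyset})$ directly from Defn.~\ref{defn:Wasserstein} with $q=1$. Since $D_\emptyset=\{\}$, the image $S_2\subset D_\emptyset$ of any partial matching $\mathbb{M}$ must be empty, so $S_1$ is empty as well and every point of $D$ is necessarily unmatched. Hence the matching sum vanishes and the only contribution is the unmatched term with coefficient $\tfrac{1}{2^1}=\tfrac12$:
\begin{equation*}
W_1(D,D_\emptyset) \;=\; \frac{1}{2}\sum_{(b,d)\in D}(d-b).
\end{equation*}
There is really no infimum to take here because the matching is forced.

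Combining the two displays gives $\int_{\mathbb{I}} B_D(s)\,ds = 2\,W_1(D,D_\emptyset)$, as claimed. I do not anticipate a real obstacle: the only subtlety is being precise that the integral of the Betti curve equals the total persistence, which relies on the interval decomposition of the persistence module (so that each off-diagonal point of $D$ corresponds to exactly one interval summand contributing $\mathds{1}_{[b,d]}$ to $B_D$). Once this is invoked, both sides are elementary evaluations.
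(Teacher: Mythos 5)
Your proof is correct, and it is genuinely more direct than the one in the paper. Both arguments rest on the same two facts: the decomposition $B_D(s)=\sum_{\alpha\in D}\mathds{1}_{\alpha}(s)$ (the paper's Eqn.~\eqref{AppndxEqn:Betti}) and the observation that against the empty diagram every point is forced to be unmatched, so $W_1(D,D_\emptyset)=\tfrac12\sum_{\alpha\in D}\ell_\alpha$ (the paper's Eqn.~\eqref{Wasserstein_1}). Where you diverge is in how you get from these to the conclusion: you simply exchange the finite sum with the integral and evaluate $\int_0^{d_{\max}}\mathds{1}_{[b,d]}(s)\,ds=d-b$ term by term, so both sides collapse to the total persistence in one line. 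The paper instead runs an induction on the number of bars, supported by Lemma~\ref{lem:removeBump}, which swaps endpoints of two overlapping bars to manufacture a bar realizing the top level set of the Betti curve, peels it off, and applies the inductive hypothesis. Your route buys brevity and makes transparent that the identity is nothing more than linearity of the integral; the paper's route buys an explicit combinatorial normal form for diagrams with a given Betti curve (the swapping lemma is of some independent interest, since it exhibits distinct diagrams with identical Betti curves and equal total persistence, illustrating the information loss of the Betti-curve representation), but as a proof of the proposition it is strictly more work. The only points worth making explicit in your write-up are that the interchange is legitimate because $D$ is finite, and that all births are nonnegative and all deaths at most $d_{\max}$ so each indicator is supported inside $\mathbb{I}$; the convention $[b,d]$ versus the paper's half-open $[b,d)$ is immaterial for the integral.
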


\revision{
This proposition implies that even if they are not stable (which should be thought of in analogue to Lipschitz functions), the Betti curves are at least continuous with respect to variations in the input data. 
This result comes from combining the above proposition with the the stability theorem for Wasserstein distance \cite{Cohen-Steiner2010,Skraba2020}.
}

\subsubsection{CROCKER plots}
\label{subsect:time-evolving}

The data we analyze here is not a single point cloud, but instead one point cloud realization {$\{X_\eta\}_{\eta \in \mathcal{A}}$}  per parameter value of the parameter space of interest {$\mathcal{A} = \{ \eta_0,\cdots, \eta_m\}$} in the dynamical system. 
Thus in the framework above, we get one persistence diagram per parameter value {$\{D_\eta\}_{\eta \in \mathcal{A}}$.}
There have been several methods proposed for accommodating analysis of data of this form. 
These include multiparameter persistence \cite{Carlsson2009}, and vineyards \cite{CohenSteiner2006}.  
However, mathematical limitations and obfuscating interpretations make these difficult to work with. 
For this reason, we look at one of the most accessible representations of this data, the Contour Realization Of Computed $k$-dimensional hole Evolution in the Vietoris-Rips complex\cite{crocker-Lori2015}, also known as a CROCKER\footnote{The naming convention comes from the association to Betti number information, but might  \href{https://en.wikipedia.org/wiki/Betty_Crocker}{only be obvious to those who regularly shop in US grocery stores}.}  plot.

While much of the original work \cite{crocker-Lori2015,crocker-Lori2019} was focused on the special case of time-varying point clouds (where the $i^{\text{th}}$ point in the cloud \revision{$X_\eta$}  is related to the $i^{\text{th}}$ point in cloud {$X_{\eta + \e}$}, e.g.~\cite{multiparameterrank}), there is no need for such a requirement. 
Indeed all we need is a sequence of Betti vectors, as in Defn.~\ref{defn:Bettivector},  over a changing parameter.

\begin{definition}
Fix a  partition of proximity parameters $P = \{ 0, \e_1, \cdots, \e_s\}$ and a  partition of the parameter space of interest 
{$\mathcal{A} = \{ \eta_0,\cdots, \eta_m\}$.}
Denote the collection of point clouds by 
$\cX = \{X_{\eta_i} \}_{i=0}^m$.
Then the $k^{\text{th}}$ CROCKER plot is the discrete function  
\begin{equation*}
    CP(\cX)[\eta_i,\e_j] = \beta_k (R_{\e_j}(X_{\eta_i})).
\end{equation*}

\end{definition} 

Note that viewed as a matrix, the CROCKER plot is the Betti vectors for each $\alpha$ value stacked horizontally, 
\begin{equation*}
 CP(\mathcal{\mathcal{X}}) = 
 \begin{pmatrix}
 | & | & & |\\
    Bv(X_{\eta_0};P) & Bv(X_{\eta_1};P) &  \cdots &  Bv(X_{\eta_m};P) \\
 | & | & & |
 \end{pmatrix}.
\end{equation*}

\begin{figure*}%
    \centering
    \includegraphics[width = \linewidth]{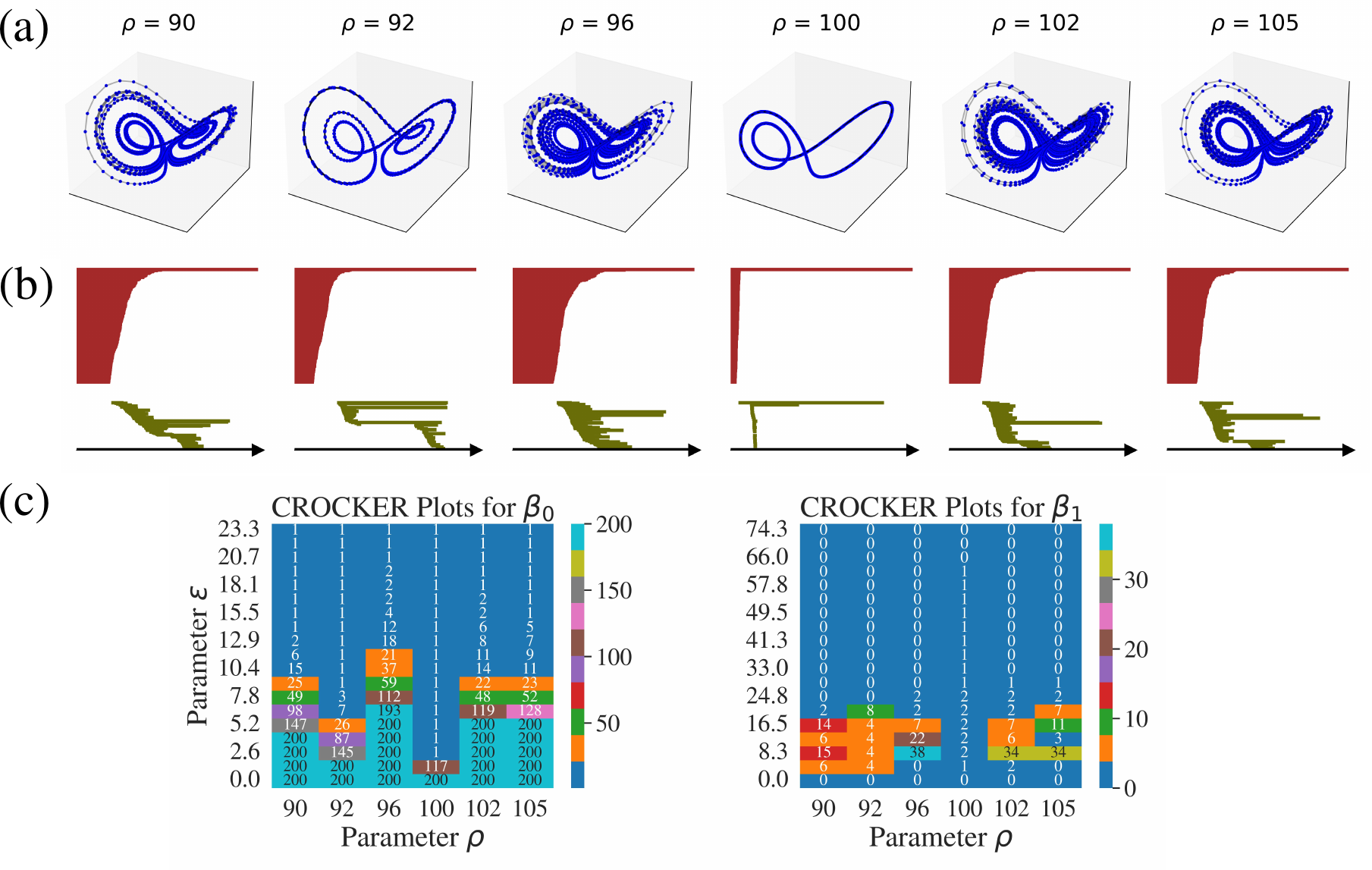}
    \caption{
    An example showing the CROCKER plots (bottom row (c); dimensions 0 and 1) for a coarse partitioning of the $\rho$ parameter for the Lorenz system in the first row (a).
    The corresponding persistence barcodes are shown in the middle row (b).
    }
    \label{fig:LorenzEx}
\end{figure*}

Consider the example of Fig.~\ref{fig:LorenzEx}, where we have six realizations of the Lorenz system for choices of the different $\rho$ in {Fig.~\ref{fig:LorenzEx}(a)}. 
The barcode for each is shown in the next row Fig.~\ref{fig:LorenzEx}(b). 
The $0$ and $1$-dimensional CROCKER plots are shown at the bottom Fig.~\ref{fig:LorenzEx}(c) as the heat maps. 
Each of the 6 columns is the Betti vector for the corresponding realization. 
The high values of Betti numbers for the low $\e$ values come from the many short bars present in the noisy systems. 
Overarching circular structure, which shows up as long bars in the barcode, appear as positive values that stretch up into the higher $\e$ values.

The goal of this paper is to use these CROCKER plots to measure  the changing topological behaviour of dynamics of nonlinear systems while varying both a system parameter and the Rips proximity parameter. 
Unlike tools used in classic bifurcation analysis, the only needed input is the reconstructed point cloud of the attractor. 
In the following section, we will show examples for how this plot can be used to find bifurcations in the behavior of dynamical systems. 

\section{Experiments and Results}
\label{sec:Experiments}

After giving specifics of the choices made in the method, in this section we show the pipeline as applied to two nonlinear dynamical systems: the Rössler system and Lorenz system.
We then give similar results for a longer list of systems, with further details of each in Appendix~\ref{App:DynamicalSystem}. 

\subsection{Specifics for the method}

Our experiments will test this pipeline for the CROCKER plots as an analysis tool for bifurcation in dynamical systems. 
While the method itself can be seen in the example Fig.~\ref{fig:LorenzEx}, we provide additional specifics and choices in this section. 

\revision{
 The dynamical systems were simulated using \texttt{DynamicalSystems.jl} ~\cite{juliaDynamicalSystems} package in julia~\cite{bezanson2017julia}. We also calculate the maximum Lyapunov exponents based on Benettin algorithm in same library. We then fetch the states of dynamical system as a point cloud for the python programming language to continue on CROCKER tasks.
}
Due to the high density of points in the resulting point clouds and computational expense of persistence diagrams, we subsampled each realization $X_\eta$ using a greedy sub-sampling algorithm \cite{greedyalgo} resulting in 500 points per point cloud.
We then compute the $0$- and $1$-dimensional persistence diagrams $D_\eta$ using \texttt{ripser} \cite{ripser}.

We compute all simulations prior to choosing the partition $P$ so as to ensure that we cover a range that includes all points in the persistence diagrams. 
Specifically, if $M$ is the maximum death time of any point in all the $k$-dimensional persistence diagrams (excluding the $\infty$-point present in all $0$-dimensional diagrams), we evenly split the interval $[0,M]$ into 100 pieces. 
Thus $P = \{0, \tfrac{1}{100} \cdot M, \tfrac{2}{100} \cdot M, \cdots, M\}$. 
With this fixed choice of partition, we can then use the persistence diagrams to get the Betti vectors using the equation 
$\beta_k(R_\e(X_\eta)) = \# \{ (b,d) \in D_\eta \mid b \leq \e < d\}$.
Note that in theory, one could speed up computation by computing the Betti vectors for a given partition directly without computing the full persistence diagrams first; however all software we are aware of returns the Betti vectors by computing the full diagram first.

\subsection{Rössler System}	

 The Rössler system is given by the equations
    \begin{equation*}
		\dot{x} =  -y -z,\quad
		\dot{y} =  x + ay,\quad
		\dot{z} =  b+ z(x-c).
	\end{equation*}
For this study, we fixed parameters $b = 2$ and $c = 4 $. 
We then vary the control parameter $a$ for 600 equally spaced values between 0.37 and 0.43. We used a sampling rate of 15 Hz for 1000 seconds and the initial conditions \revision{$[x, y, z] = [-0.4,0.6,1]$}. After solving the system, we retain the last 170 seconds to avoid transients.

\begin{figure*}%
    \centering
    \includegraphics[width=.8\linewidth]{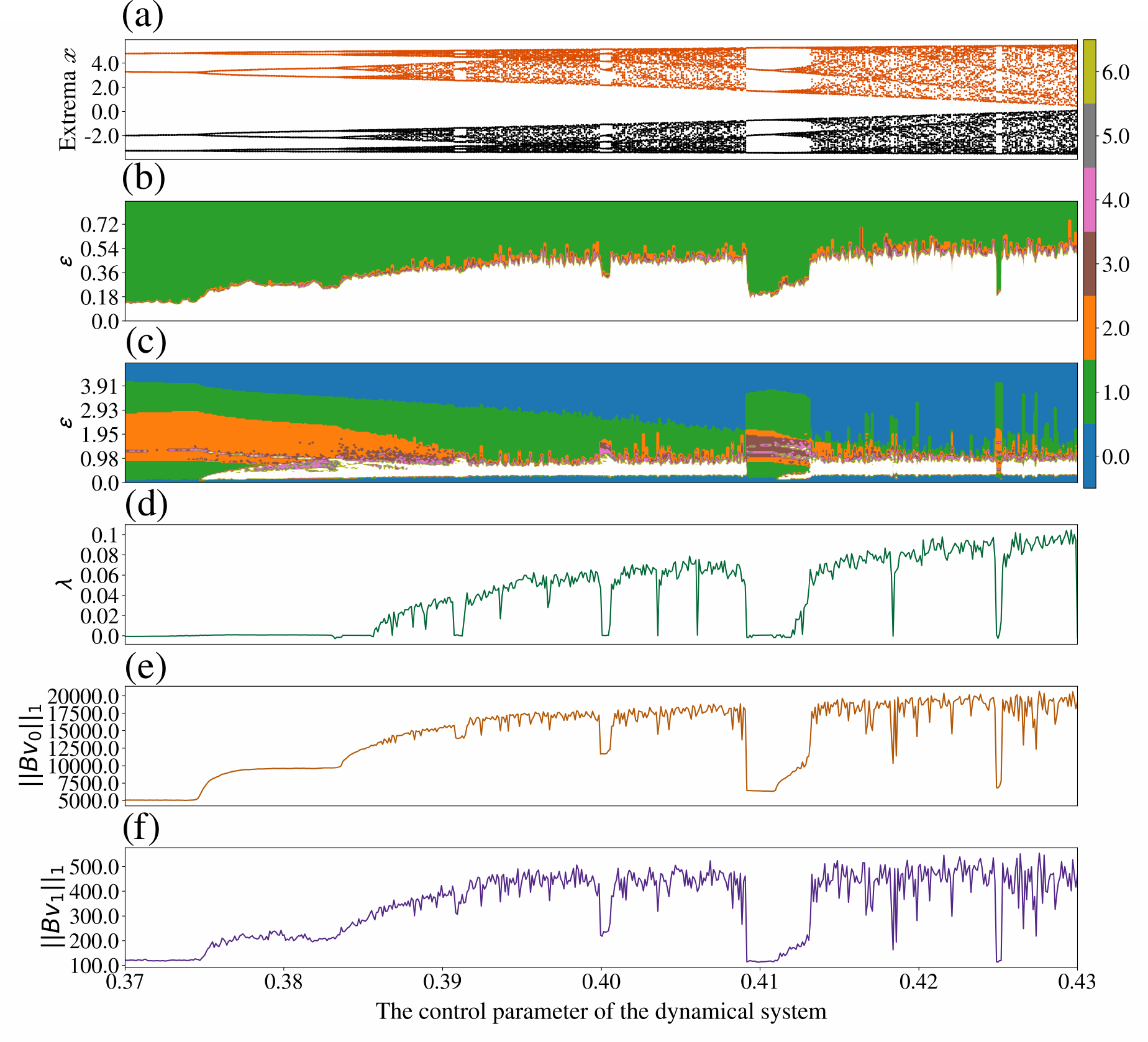}
    \caption{The bifurcation diagram (a) and Lyapunov exponent (d) are shown for the Rössler system varying the control parameter $a$. 
    The 0- and 1-CROCKER plots (b and c respectively) and $L_1$ norm of each Betti vector (e and f) are also shown.}
    \label{fig:BifurcationRossler}
\end{figure*}

In Fig.~\ref{fig:BifurcationRossler}(a), we show the bifurcation diagram for the Rössler system varying parameter $a$ with 600 equally-spaced values between 0.37 and 0.43 with the other parameters fixed. 
The local maxima and minima values of the system variable $x$ are colored \revision{orange} and \revision{black}, respectively. 
Below this are the CROCKER plots for homology dimensions 0 (Fig.~\ref{fig:BifurcationRossler}(b)) and 1 (Fig.~\ref{fig:BifurcationRossler}(c)). 
The value of the Betti number is shown by color; white regions are values for which \revision{$\beta_k >6$}. 
In these cases, those regions are largely viewed as noise. 
For instance, the large white region in the 0-CROCKER is caused by small values of $\e$ resulting in many individual connected components in $R_\e(X)$. 
Indeed, for small enough $\e$, this is one component per point in the cloud. 
The timing for these components merging up is a result of density of the sampling of the realization of the dynamical system. 
As such, the white region in the 0-CROCKER plot here is largely a measure of our subsampling required to compute persistence on a smaller point cloud.
Periodic regions include multiple runs along the attractor, thus increasing the density of coverage and resulting in earlier mergings in the 0-dimensional diagram.

Thanks to these visualization tools, we can qualitatively compare the CROCKER plots with the bifurcation diagram.
Note that most markedly around the bifurcation that occurs around $a = 0.41$, there is a clear shift in the CROCKER plots as well as in the bifurcation diagram. 
This occurs both in the 0- and 1-dimensional diagrams. 
In the 0-dimensional CROCKER plot, there is a drop in the maximum $\e$ (vertical axis) at which the Betti curve becomes a constant 1\footnote{Every 0-dimensional diagram for a point cloud has an infinite bar representing the first connected component which is born and is considered to live for every value of $\e$. Thus the 0-dimensional CROCKER plots always have a region of 1 at their maximum $\e$ boundary.}.
The 1-dimensional CROCKER plot has a marked shift in the maximum $\e$ value for which the Betti curve is positive, meaning this persistence diagram has a long lived bar. 
Further, the thick brown region ($\beta_1 = 3$) implies there are likely two additional long lived bars in the persistence diagrams in this area. 
Moreover, when we vertically look at 1-dimensional CROCKER, the system has two prominent circular shapes in the system parameters interval between 0.37 and 0.39. For instance, one of them has a lifetime of approximately 4 ($\epsilon_{b}\approx 0.2,\: \epsilon_{d} \approx 4.02$) while the other one has a lifetime of 2 ($\epsilon_{b}\approx 0.9,\: \epsilon_{d} \approx 2.9$) at the system parameter 0.37. 
On the other hand, when we horizontally drive the system parameter to the right sides between the parameters 0.39 and 0.42, these 2 loops will change into a circular shape since one of them disappeared. 

There is an even further  simplified version of viewing the data, namely the $L_1$ norm of the Betti vector.
For partition $P = \{\e_0,\cdots,\e_s\}$, this is given as
\begin{equation}
    \| Bv_k(X;P)\|_1 = \sum_{\forall i} \beta_k(R_{\e_i}).
\end{equation}
These graphs are shown in parts (e) and (f) of Fig.~\ref{fig:BifurcationRossler}. 
Again, we can see qualitative similarities, in particular when comparing the graphs of the $L_1$ norms with the {maximum} Lyapunov exponent graph $\lambda$ \revision{obtained from Benettin algorithm}. 

For a more quantitative test, we compute the Pearson correlation coefficient between the \revision{maximum} Lyapunov exponent and the $L_1$ norms for 0- and 1-dimensional information. 
Full details of this computation are given in Appendix \ref{App:correlations}, but the idea is that values close to 1 imply a strong positive linear correlation.
For our cases, the computed Pearson coefficient values  were \revision{0.88 and 0.85}, respectively. 
This means that there is a strongly positive correlation between them.

\subsection{Lorenz System}
We run the same test on  Lorenz system which consists of three ordinary differential equations, 
    \begin{equation*}
        \dot{x} = \sigma(y-x),\quad
        \dot{y} = x(\rho-z)-y,\quad
        \dot{z} = xy-\beta z.
    \end{equation*}
Here the parameters $\sigma = 10$ and $\beta = 8/3$ are fixed, while the control parameter $\rho$ is varied across 600 equally spaced values in the interval $[90,105]$. 
The solutions are simulated using a sampling rate of 100 Hz for 100 seconds with the initial conditions {$[x, y, z] = [10^{-10},0,1]$}. 
After solving the system, we take the last 20 seconds to avoid transients.
    
\begin{figure*}%
    \centering
    \includegraphics[width=.8\linewidth]{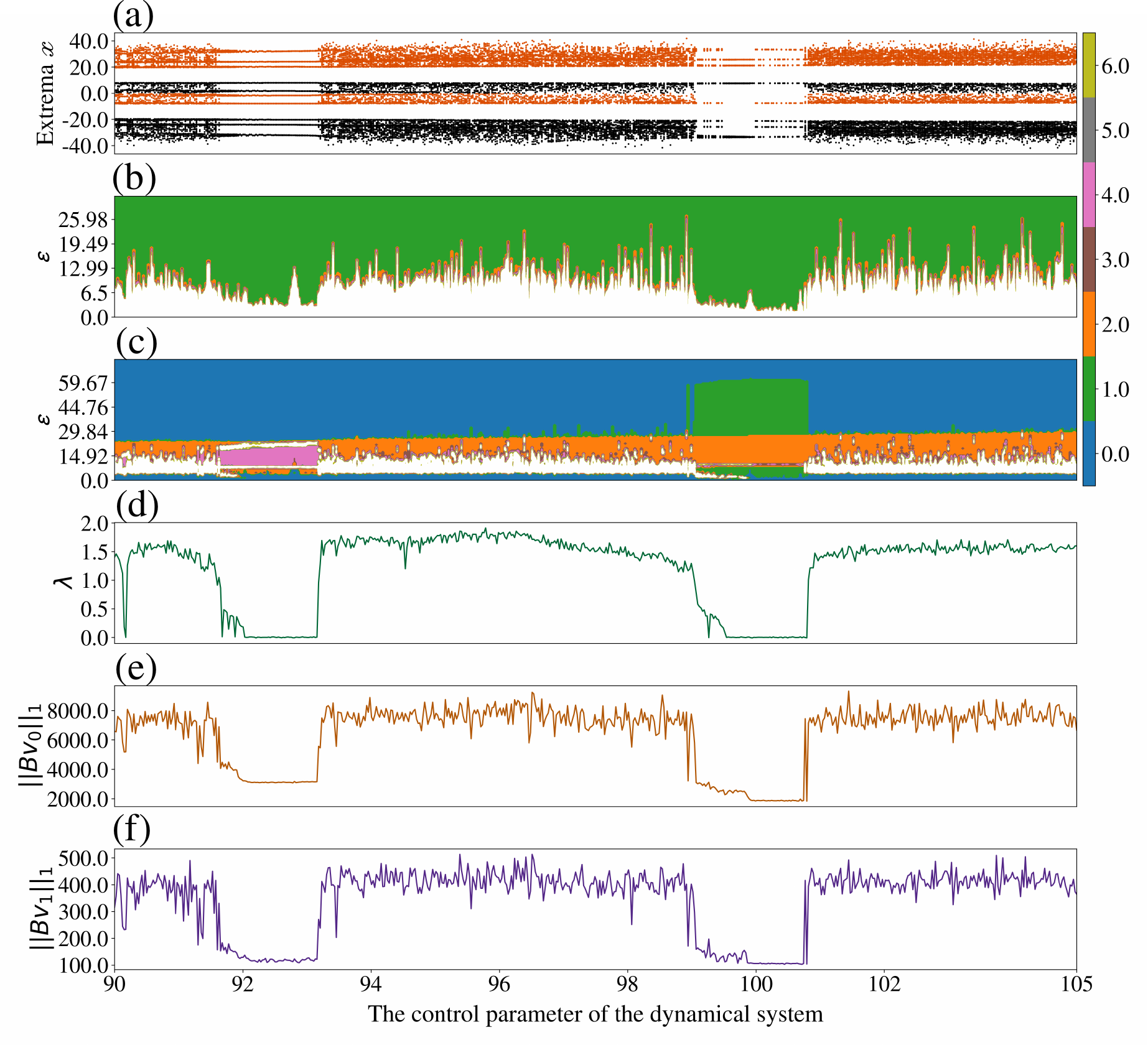}
    \caption{The bifurcation diagram (a), the 0- and 1- dimensional CROCKER plots (b and c, respectively), the maximum Lyapunov exponents (d) and for each CROCKER plots , the $L_1$ norm of each Betti vector (e and f) corresponding to varying the control parameter $\rho$ for the Lorenz system.}
    \label{fig:BifurcationLorenz}
\end{figure*}   
    
In Fig.~\ref{fig:BifurcationLorenz}, as in the R\"{o}ssler system, Lorenz's CROCKER and $L_1$ norms exhibit similar characteristics to the bifurcation diagram and the {maximum} Lyapunov exponent.
We also note that there is a clear relationship between the {maximum} Lyapunov exponent and the $L_1$ norm of the CROCKER vectors as seen on the Fig.~\ref{fig:BifurcationLorenz}(d-f). 
The computed Pearson coefficient values between the {maximum} Lyapunov exponent, and the $L_1$ norms for 0- and 1-dimensional {Betti vectors} were 0.93 in both cases.

However, when we examine two different system parameters in the same system, some differences emerge in the case of 1-dimensional CROCKER. 
In particular as shown in Fig.~\ref{fig:BifurcationLorenzParticular}, consider the parameters $\rho$ around 92.5 and 100. 
While in the 0-dimensional CROCKER, there is not an obvious difference  between the two regions. 
However, in this particular case, the 1-dimensional CROCKER  shows a stark contrast. 
For example, there are 4 noticeably persistent points around $\rho=92.5$, which can be seen in the thick pink region corresponding to $\beta_1=4$ in that area of the 1-dimensional CROCKER. 
Similarly, there is an extremely long lived persistence bar around $\rho=100$ seen in the thick green region which extends farther up into the $\e$ parameter of the vertical axis.
These regions were chosen because they encompass regions of parameter space where the Lyapunov exponent, as seen in Fig.~\ref{fig:BifurcationLorenz}, sometimes has non-zero approximations computed. 
This means that the CROCKER plot can potentially provide a more fine-grained separation than the {maximum} Lyapunov exponent alone. 
     
\begin{figure*}%
    \centering
    \includegraphics[width=\linewidth]{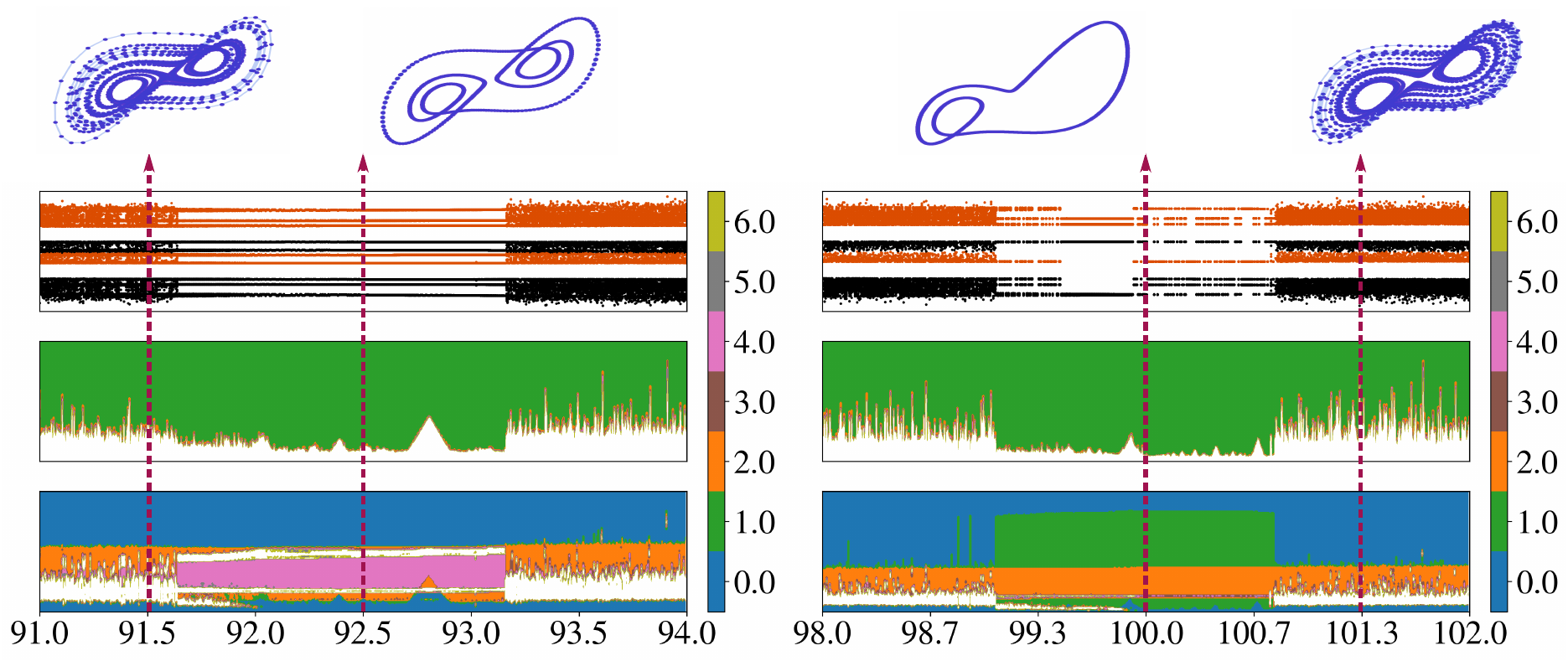}
    \caption{\revision{Zoomed in versions of Fig.~\ref{fig:BifurcationLorenz} showing that the 1-dimensional CROCKER plot can detect changes in behavior related to different sorts of looping structures which were harder to detect using the Lyapunov exponent shown in Fig.~\ref{fig:LorenzBifurcLyapunov}.}}
    \label{fig:BifurcationLorenzParticular}
\end{figure*}

\subsection{Correlation between the maximum Lyapunov exponents and the $L_1$ norm of CROCKER plots}

We have shown a close relationship in terms of the Pearson correlation coefficient for both the R\"{o}ssler and Lorenz systems; to see if this behavior persists, in this section we show similar tests for a larger collection of dynamical systems. 
Specifics for the systems and their simulation information are given in  Appendix~\ref{App:DynamicalSystem}. 
In Table~\ref{table:correlation}, we show both the Pearson $r$  and Spearman $\rho$ correlation coefficients with subscripts denoting the dimension of CROCKER plots used \revision{in each algorithm}. 
Details of $r$ and $\rho$  can be found in Appendix~\ref{App:correlations}. 

From Table~\ref{table:correlation}, we can conclude, at least experimentally, that the correlation between the maximal Lyapunov exponent and 1- dimensional $L_1$ norm of CROCKER is a strong positive linear.
\revision{
The correlation coefficients between the maximum Lyapunov exponent came from the Benettin algorithm and $L_1$ norm of Betti vectors is greater than the one obtained from R\"{o}senstein algorithm.
It shows that the $L_1$ norms of Betti vectors includes more information than R\"{o}senstein algorithm when we do not have the full equations of the model.
}
For completeness, the appendix (Fig.~\ref{fig:allcrockers}) includes the  bifurcation diagrams and CROCKER plots for the dynamical systems given in Table~\ref{table:correlation} in Appendix~\ref{App:DynamicalSystem}.

\begin{table*}%
\centering
\newlength{\width}
\width12mm
\caption{
\revision{
The Spearman $\rho$ and Pearson $r$ correlation between the maximum Lyapunov exponent, obtained from R\"{o}senstein and Benettin algorithm, and $L_1$ norm of Betti vectors for each dimension 0 and 1.
}
}\label{table:correlation}
\begin{tabular}{p{4\width} p{0.9\width} p{0.9\width} p{0.9\width} p{0.9\width} p{0.9\width}  p{0.9\width} p{0.9\width} p{0.9\width} p{0.9\width}}
&\multicolumn{4}{c}{\textbf{R\"{o}senstein Algorithm}}& & \multicolumn{4}{c}{\textbf{Benettin Algorithm}}  \\
\cline{2-5} \cline{7-10}
\textbf{Systems} & $\rho_0$ & $r_0$ &  $\rho_1$ & $r_1$ & & $\rho_0$ & $r_0$ &  $\rho_1$ & $r_1$ \\
\hline 
Lorenz & $0.679$ & $0.854$ & $0.647$ & $\mathbf{0.857}$ & & $0.657$ & $\mathbf{0.936}$ & $0.622$ & $0.934$ \\
Rössler & $\mathbf{0.863}$ & $0.856$ & $0.797$ & $0.823$ & & $\mathbf{0.884}$ & $0.88$ & $0.807$ & $0.847$ \\
Coupled Lorenz Rössler & $0.924$ & $\mathbf{0.925}$ & $0.887$ & $0.878$ & & $0.912$ & $\mathbf{0.937}$ & $0.874$ & $0.892$ \\
Complex butterfly & $0.934$ & $0.942$ & $0.932$ & $\mathbf{0.945}$ & & $0.927$ & $0.948$ & $0.912$ & $\mathbf{0.953}$\\
Hadley circulation & $0.805$ & $0.861$ & $0.832$ & $\mathbf{0.871}$ & & $0.816$ & $0.884$ & $0.841$ & $\mathbf{0.892}$\\
Moore-Spiegel  & $0.592$ & $\mathbf{0.841}$ & $0.593$ & $0.808$ & & $0.572$ & $\mathbf{0.863}$ & $0.563$ & $0.845$\\
Halvorsens  & $0.737$ & $0.857$ & $0.745$ & $\mathbf{0.888}$ & & $0.731$ & $0.865$ & $0.71$ & $\mathbf{0.897}$\\
Burke-Shaw  & $0.805$ & $0.843$ & $0.823$ & $\mathbf{0.859}$ & & $0.815$ & $0.833$ & $0.833$ & $\mathbf{0.873}$\\
Rucklidge  & $0.842$ & $\mathbf{0.934}$ & $0.835$ & $0.928$ & & $0.852$ & $\mathbf{0.942}$ & $0.832$ & $0.933$\\
WINDMI & $0.787$ & $0.896$ & $0.789$ & $\mathbf{0.916}$ & & $0.789$ & $0.898$ & $0.802$ & $\mathbf{0.921}$
\end{tabular}

\end{table*}
    
\subsection{Computation Cost}

We further note the computational cost of each method \revision{in the setting where we assume the input data is available but the model is not. 
For this purpose, we first simulate the solution of the dynamical system using the python library \texttt{teaspoon} \cite{teaspoon}. 
We then record the computation time of two methods starting from the input of the available data: (i) $L_1$ norms of Betti vectors and (ii) the maximum Lyapunov exponent computed used R\"{o}senstein algorithm.}
All  computations were performed on a Ubuntu 20.10 desktop with 16 GB RAM, Intel(R) Core(TM) i7-9700 CPU 3.00GHz, and 8 cores using the python language. 
A table of computation times for the dynamical systems defined in Appendix~\ref{App:DynamicalSystem} is given in Table~\ref{Table:ExecutionTime}.
These calculations are done for 600 bifurcation parameter values;  the average and standard deviation of the run times is shown.
Computation for determining $\|Bv_i\|_1$ is time from input of point cloud to output of $L_1$ norm; and so includes subsampling, construction of the simplicial complex filtration, persistence diagram, Betti vector, and computation of the norm. 
Lyapunov code was performed using the slope of the average divergence \revision{curves obtained} from linear fits as described \revision{in the original R\"{o}senstein paper\cite{rosenstein1993practical}. To get comparable execution time, we fixed the number of divergences are calculated to be in the range of the linear region, as first 20 distances in the calculation of the maximum Lyapunov exponent.}
We note that in most cases the run time for the topological representations is comparable with the Lyapunov exponent computation, and in many cases is actually considerably faster. 
These computations were performed largely with out-of-the-box open source software; we suspect that computations can be sped up with more attention paid to code developed specifically for this purpose.

\begin{table}
\centering
\caption{For each system defined in Appendix~\ref{App:DynamicalSystem}, we record the execution time to obtain \revision{the maximum Lyapunov exponents used the R\"{o}senstein algorithm} and the $L_1$ norm of Betti vectors for each particular system parameter.}
\begin{tabular}{p{3.2cm}  p{1.6cm}  p{1.6cm} p{1.6cm}}
\textbf{Systems}          & \hspace{1pt}\textbf{Lyapunov} & \hspace{8.5pt}$\mathbf{\|Bv_{0}\|_1}$ & \hspace{8.5pt}$\mathbf{\|Bv_{1}\|_1}$  \\ \hline
Lorenz                    & \textbf{0.42 $\pm$ 0.47}         & 0.53   $\pm$ 0.56               & 0.51 $\pm$ 0.69          \\
Rossler                   & 1.07 $\pm$ 0.71         & 1.21   $\pm$ 1.86               & \textbf{0.98 $\pm$ 1.08}          \\
Coupled Lorenz Rossler    & \textbf{0.38 $\pm$ 0.35}         & 1.24   $\pm$ 1.88               & 1.12 $\pm$ 2.04          \\
Complex butterfly         & 2.52 $\pm$ 1.15         & 0.36   $\pm$ 0.31               & \textbf{0.33 $\pm$ 0.31}          \\
Hadley circulation        & 2.83 $\pm$ 1.03         & 1.24   $\pm$ 1.51               & \textbf{1.21 $\pm$ 1.49}          \\
Moore-Spiegel             & 2.47 $\pm$ 0.91         & 0.34   $\pm$ 0.34               & \textbf{0.28 $\pm$ 0.12}          \\
Halvorsens                & 2.49 $\pm$ 0.92         & \textbf{1.09   $\pm$ 0.81}               & 1.19 $\pm$ 1.69          \\
Burke-Shaw                & 2.73 $\pm$ 2.31         & \textbf{1.35   $\pm$ 0.92}               & 1.41 $\pm$ 1.39          \\
Rucklidge                 & 2.36 $\pm$ 1.21         & \textbf{0.79   $\pm$ 0.79}               & 0.91 $\pm$ 1.09          \\
WINDMI                    & 1.97 $\pm$ 0.52         & \textbf{0.63   $\pm$ 0.43}               & 0.78 $\pm$ 0.55         
\end{tabular}
\label{Table:ExecutionTime}
\end{table}

\section{Conclusions and future directions}
\label{sec:Conclusions}

In this work, we have begun an investigation of the use of CROCKER plots for bifurcation analysis in dynamical systems. 
We show that in a simple test case, there is clearly a relationship between a representation of change in the system (the Lyapunov exponent) with the structure of the CROCKER plot, as well as with the $L_1$ norm of each Betti vector.
We also proved a relationship between the 1-Wasserstein distance to the empty diagram, and the $L_1$ norm of the Betti vector.

This work, of course, leads to many interesting open questions. 
For starters, all work in this paper has been done in a data-driven manner. 
We suspect more can be done to provide a theoretical connection between the Lyapunov exponent and the $L_1$ norm of the Betti curve. 

Further, while our computation times are \revision{on par} with those for computing Lyapunov, we believe more can be done to improve \revision{the computation} time of the method as shown.  
First, to our knowledge all TDA code available computes Betti vectors by first computing the full persistence diagram, and then reverse engineering the Betti curve. 
Might there be a more direct computation method which provides speedups relative to the desired refinement of the partition?
    
Another potentially useful aspect of the CROCKER method for analyzing chaos as opposed to Lyapunov exponents is that of sensitivity to noise. 
In particular, Lyapunov is notorious for its slow computation and its sensitivity to error and noise. 
On the other hand, persistence comes with a theoretically grounded framework showing stability to noisy systems; i.e.~that the construction of persistence from input data is a 1-Lipschitz procedure. 
The issue to be overcome is that the instability of Betti numbers~\cite{crocker-stack, johnson2021instability} extends to the CROCKER plots. 
That being said, our Prop.~\ref{prop:RelCurveWasser} shows that the construction of Betti curves is a continuous procedure. 
Further work, either data driven or theoretical, is needed to understand whether this instability shows up in the cases of the embedded time series data being studied in this paper.

\section*{Acknowledgments}
The research of IG was supported by a grant program (B\.{I}DEB 2214-A:1059B142000135) from the T\"{U}B\.{I}TAK, Scientific and Technological Research Council of T\"{u}rkiye.
This material is based upon work supported by the Air Force Office of Scientific Research under award number FA9550-22-1-0007.

The authors wish to thank the input of two anonymous reviewers whose feedback helped strengthen the final manuscript. 
The authors also thank Liz Bradley for valuable discussions and pointing our attention to issues with the calculation of maximum Lyapunov exponents.

\section*{Author Declarations}
\subsection*{Conflict of Interest}
The authors have no conflicts to disclose.

\section*{Data Availability}
The data that support the findings of this study are available from the corresponding author upon reasonable request.

\appendix

\revision{    
\section{Benettin vs.~R\"osenstein's Algorithms for Maximal Lyapunov Exponents}
\label{Appx:DifferentLyapAlgo}
In this section, we give further information on the differences between the Benettin and R\"osenstein algorithms for computing maximal Lyapunov exponents. 
For more information on the maximum Lyapunov exponent, we refer the reader to Kantz and Schreiber's book~\cite{Kantz2004} and Parlitz's notes~\cite{Parlitz2016}.

The main difference between the two algorithms is as follows.
The R\"osenstein algorithm can be computed using only a reconstructed attractor from a dynamical system without necessarily having access to the underlying differential equations. 
In this case, the goal is to determine the slope of the average divergence curve and shown in Fig.~\ref{fig:DiffAlgorithLE}(b).

The issue that arises is that manual input is required.  
The approximation for the maximum Lyapunov exponent is the slope of a fit to the initial portion of the curve. 
If we are lucky and choose $k=25$, we see that the fitted yellow line gives a reasonable approximation to the slope of the initial portion. 
However, if we instead choose $k=40$, the fitted line is no longer a reasonable estimate.
The solution is, in fact, to manually check all input time series, however, for the large number of realizations studied in this manuscript, this is unreasonable to do beyond basic spot checking. 
All calculations used in the main manuscript use $k=20$, which was chosen as it seems to give reasonable results. 

On the other hand, if we are in a situation where we have access to the model information, a better choice of Lyapunov computation is Benettin's algorithm. 
This works by evolving the tangent vectors and following the Gram-Schmidt procedure which requires the Jacobian matrix of the system.
For this reason, we treat the Benettin calculation as the ground truth in this manuscript, but provide results with comparison to the R\"osenstein algorithm since this is a commonly used tool in practice when the model is not available.

In Fig.~\ref{fig:DiffAlgorithLE}(a), we have plotted some example computations, where each point represents the maximal Lyapunov exponent for the same realization of the R\"ossler system using the two methods. 
In one case, every computation uses $k=25$ and in the other, $k=40$.
While there is some noise in the approximation, which is to be expected, we see that R\"osenstein is close to a linear fit with the Benettin results. 
The MSE for these lines are $1.3 \times 10^{-6}$ and  $3 \times 10^{-6}$, respectively. 
Note that the outlier point drawn as a red star is the example used to calculate Fig.~\ref{fig:DiffAlgorithLE}(b). 

Note that while the results differ by a multiplicative constant, the only thing that matters for analysis is whether the exponent is zero or non-zero, thus showing that the automated parameter choices (in particular, $k$) we have made are reasonable. 
It will be interesting to see in future work how much dependence there is on the results of the correlation between the TDA results and those using different choices of $k$.

    \begin{figure*}
        \centering
        \includegraphics[width = .7\linewidth]{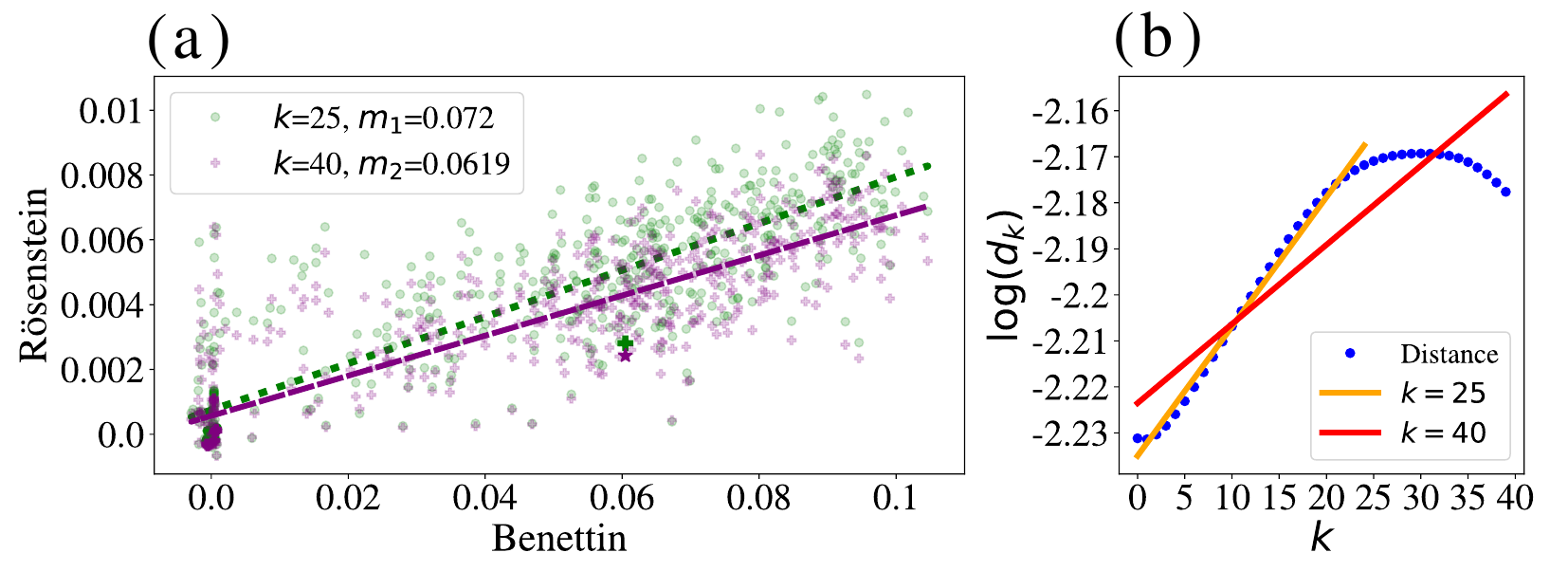}
        \caption{The relationship between two methods, R\"{o}senstein and Benettin algorithm, to calculate the maximum Lyapunov exponent for the different 600 R\"{o}ssler systems. 
        Figure (a) shows nearly the linear relation between them for two different choices of $k$ in the R\"osenstein algorithm. 
        Figure (b) shows the last step of R\"{o}senstein algorithm  to calculate the maximum Lyapunov exponent corresponding to the same system realization shown as the purple star and green plus sign in the scatter plot at left.}
        \label{fig:DiffAlgorithLE}
    \end{figure*}
    
}

\section{Relation between Wasserstein distance and Betti curves }
\label{App:BettiNorm}

In this section of the appendix, we give the proof of Prop.~\ref{prop:RelCurveWasser}, which states that the relationship between the norm of the Betti curve and the 1-Wasserstein distance are related as follows.

\RelCurveWasser*

First, we note that because the Betti curve is an integer valued function, we can rewrite it in terms of indicator functions of each bar in the barcode. 
Specifically, let $ D=\{\alpha_i = (b_i,d_i)\}_{i=0}^{w} $ be a persistence diagram with $|D|=w+1$ points. 
The Betti curve of the persistence diagram 
$ B_D : \mathbb{R} \rightarrow \mathbb{N} $ is given by 
\begin{equation}\label{AppndxEqn:Betti}
 B_D(s) = \sum_{\alpha \in D} \mathds{1}_{\alpha}(s), 
\end{equation}
where the indicator function for a point $\alpha=(b,d)\in D$ is 
\begin{equation*}
\mathds{1}_{\alpha}(s) = 
\begin{cases}
1, \quad b \leq s < d, \\ 
0, \quad \mbox{otherwise}. 
\end{cases}  
\end{equation*}
See, for example, the Betti curve given in Fig.~\ref{fig:barcodescurve_basic}.

On the other hand, we can obtain the topological features by using 1-Wasserstein distance between a given diagram $ D $ and the empty diagram $ D_{\emptyset} $. 
Using Defn.~\ref{defn:Wasserstein}, we have that
    \begin{equation}\label{Wasserstein_1}
       W_1(D,D_{\emptyset}) = \frac{1}{2}\sum_{\alpha \in D} \ell_{\alpha},
    \end{equation}
    where $\ell_{\alpha}=d-b$ is the lifetime of the topological feature $\alpha=(b,d)\in D$.

To prove Prop.~\ref{prop:RelCurveWasser}, we begin with the following lemma, which will be set up to prove the proposition by induction. 
\begin{lemma}
\label{lem:removeBump}

     Let $D=\{\alpha_i = (b_i,d_i)\}_{i=0}^{w} $ be persistence diagram and let $A$ be the maximum value of the Betti curve,  $a = \max_{s \in \R} B_D(s)$.
     Let $\overline{\alpha}_{*}$ be a maximal (with respect to inclusion) connected interval where $B_D$ takes value $A$, i.e.~$B_D(s) = A$ for all $s \in \overline{\alpha}_{*}.$
     Then there is a persistence diagram $\widetilde D$ which contains $\overline \alpha_*$ as a bar, has the same number of bars as $D$, and satisfies 
    $$
    W_1(D,D_\emptyset) = W_1(\tilde{D},D_\emptyset) \quad \mbox{ and } \quad B_D(s) = B_{\tilde{D}}(s). 
    $$
    
\end{lemma}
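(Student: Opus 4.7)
The plan is to produce $\widetilde D$ from $D$ by a sequence of \emph{bar swaps} that preserve the Betti curve, the sum of lifetimes, and the number of bars, culminating in a swap that introduces $\overline{\alpha}_*$ as a bar. The basic swap I will use takes two bars $(b_1,d_1)$ and $(b_2,d_2)$ in $D$ satisfying $b_1 \leq b_2 < d_1 \leq d_2$ and replaces the pair with $(b_1,d_2)$ and $(b_2,d_1)$. A direct case analysis on the five intervals determined by $b_1,b_2,d_1,d_2$ shows that the pointwise value of $B_D$ is unchanged, while the identity $(d_1-b_1)+(d_2-b_2) = (d_2-b_1)+(d_1-b_2)$ shows the sum of lifetimes is preserved; by equation~(\ref{Wasserstein_1}) this also preserves $W_1(\cdot,D_\emptyset)$. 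The hypothesis $b_2 < d_1$ ensures both new points lie strictly above the diagonal, so the total bar count is unchanged.

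Next I will use the maximality of $\overline{\alpha}_*$ to locate a useful swap pair. Since $B_D$ attains its maximum value $A$ on $[b_*,d_*)$ and only there, $B_D$ must strictly increase at $b_*$ and strictly decrease at $d_*$, so at least one bar $\alpha_1=(b_*,d_1) \in D$ is born at $b_*$ and at least one bar $\alpha_2=(b_2,d_*) \in D$ dies at $d_*$. I will then argue that these can be chosen to satisfy $d_1 \geq d_*$ and $b_2 \leq b_*$: in the clean case where no bar of $D$ is born or dies strictly inside $(b_*,d_*)$, the constancy $B_D = A$ on $\overline{\alpha}_*$ forces this automatically, since a bar born at $b_*$ and dying before $d_*$ would make $B_D$ drop below $A$ somewhere inside. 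Once such $\alpha_1,\alpha_2$ are in hand, applying the basic swap to $\{\alpha_1,\alpha_2\}$ introduces the new bar $(b_*,d_*) = \overline{\alpha}_*$ alongside $(b_2,d_1)$; both lie above the diagonal since $b_2 \leq b_* < d_* \leq d_1$, giving the desired $\widetilde D$.

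The main obstacle is the degenerate case in which births and deaths occur strictly inside $\overline{\alpha}_*$ in a balanced fashion, so that no single bar of $D$ spans all of $[b_*,d_*)$. For example, $D$ might contain both $(b_*,c)$ and $(c,d_*)$ with $b_* < c < d_*$, or longer chains of such meeting intervals. Here I plan to first execute a preliminary sequence of swaps that ``stitch'' such a chain into a single bar starting at $b_*$ and surviving through $d_*$: swapping an interior bar $(c,d)$ with a longer bar $(e,f) \in D$ satisfying $e \leq b_*$ and $f \geq d$, whenever available, produces a bar starting no later than $b_*$ that reaches further toward $d_*$, and one can iterate along the chain. Termination should follow from a monotone potential such as the largest value of $t \in [b_*,d_*]$ for which some bar of the current diagram covers $[b_*,t)$; each swap strictly increases this potential while the other invariants are preserved. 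Verifying termination carefully, and checking that each intermediate swap outputs only off-diagonal bars, is the most delicate part of the proof.
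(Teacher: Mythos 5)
Your basic swap and your ``clean case'' are precisely the paper's argument: the paper's proof of Lemma~\ref{lem:removeBump} picks a bar $\alpha_1=(b_1,d_*)$ dying at $d_*$ with $b_1<b_*$ and a bar $\alpha_2=(b_*,d_2)$ born at $b_*$ with $d_2>d_*$, trades endpoints to produce $\overline{\alpha}_*=(b_*,d_*)$ and $(b_1,d_2)$, and verifies exactly the two identities you verify (preservation of the sum of lifetimes for $W_1$ via Eq.~\eqref{Wasserstein_1}, and of the sum of indicator functions for $B_D$). Where you go beyond the paper is in noticing that such a spanning pair need not exist: the paper simply asserts that bars with $b_1<b_*<d_*<d_2$ can be found, which is exactly what fails in your ``degenerate'' case.

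However, your stitching plan for that case cannot be completed, and the obstruction is not just delicate bookkeeping. Whenever no bar of $D$ straddles an interior event point $c\in(b_*,d_*)$, the constancy of $B_D$ at the value $A$ forces every bar alive just before $c$ to die at $c$ and every bar alive at $c$ to be born at $c$; the only swap that advances your potential past $c$ then has $b_2=d_1=c$ and outputs the diagonal point $(c,c)$, so the bar count drops. In fact no argument can rescue the statement as written: for $D=\{(0,1),(1,2)\}$ one has $A=1$ and $\overline{\alpha}_*=[0,2)$, and any diagram containing $(0,2)$ together with a second off-diagonal bar has Betti curve equal to $2$ somewhere or positive outside $[0,2)$, so no two-bar $\widetilde D$ with $B_{\widetilde D}=B_D$ exists. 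The repair is to discard the zero-length bar---which changes neither $W_1(\cdot,D_\emptyset)$ nor the Betti curve---weaken ``the same number of bars'' to ``at most as many bars,'' and run the induction in Proposition~\ref{prop:RelCurveWasser} as strong induction on the number of bars. With that amendment your termination argument does go through, since the potential takes values in the finite set of births and deaths of $D$ and strictly increases at each stitch.
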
    
    
\begin{proof}
If $\overline \alpha_*$ is already a bar of $D$, we set $\widetilde D = D$ to complete the proof, so we may assume that there is no such bar in $D$. 
This means that there must be two bars corresponding to the endpoints of the interval. 
Then let  $\alpha_1=(b_1,d_1)$ and $\alpha_2=(b_2,d_2)$
be bars in $D$ with 
$b_1<b_2=b_{*}<d_{*}=d_1<d_2$. 
See Fig.~\ref{fig:barcodescurve_basic} for an example of the two cases.
    
Define $\alpha_1'=(b_2,d_1)$ and $\alpha_2'=(b_1,d_2)$ to be a pair of bars with traded endpoints and let 
$\tilde{D} = D\backslash \{\alpha_1, \alpha_2\} \cup \{\alpha_1', \alpha_2'\}$ be the diagram with those bars swapped in. 
Then because the lifetimes satisfy
\begin{align*}
 \ell_{\alpha_1} + \ell_{\alpha_2} &= (d_1-b_1) + (d_2-b_2)\\
 &= (d_1-b_2) + (d_2-b_1)\\
 &= \ell_{\alpha_1'} + \ell_{\alpha_2'}
\end{align*}
we have that
\begin{align*}
    W_1(D,D_\emptyset) = \frac{1}{2}\sum_{\alpha\in D} \ell_{\alpha} 
                    = &\frac{1}{2}\left( \ell_{\alpha_1} + \ell_{\alpha_2} \right) + \frac{1}{2}\sum_{\alpha\in D\backslash\{\alpha_1, \alpha_2\}} \ell_{\alpha},\\
                    = &\frac{1}{2}\left( \ell_{\alpha_1'} + \ell_{\alpha_2'}\right) + \frac{1}{2}\sum_{\alpha\in D\backslash\{\alpha_1, \alpha_2\}} \ell_{\alpha},\\
                    = &  W_1(\tilde{D},D_\emptyset).
\end{align*}

It is easy to check that 
$
\mathds{1}_{\alpha_1}(s) + \mathds{1}_{\alpha_2}(s) 
=
\mathds{1}_{\overline{\alpha}_1}(s) + \mathds{1}_{\overline{\alpha}_2}(s).$
Combining this with Eqn.~\ref{AppndxEqn:Betti}, we have 
    \begin{align*}
    B_D(s) = \sum_{\alpha \in D} \mathds{1}_{\alpha}(s) = & \mathds{1}_{\alpha_1}(s) +\mathds{1}_{\alpha_2}(s) + \sum_{\alpha \in D\backslash\{\alpha_1, \alpha_2\}} \mathds{1}_{\alpha}(s),\\
    = & \mathds{1}_{\overline{\alpha}_1}(s) +\mathds{1}_{\overline{\alpha}_2}(s) + \sum_{\alpha \in D\backslash \{\alpha_1, \alpha_2\}} \mathds{1}_{\alpha}(s),\\
    = & B_{\tilde{D}}(s)
    \end{align*}
    completing the proof.
    \end{proof}

For simple visualization example,  consider Figure~\ref{fig:barcodescurve_basic}. 
In both example barcodes, we have $\alpha_* = [b_2,d_1)$. 
In Fig.~\ref{fig:barcodescurve_basic}(a) the two bars correspond to $\alpha_1$ and $\alpha_2$, and when endpoints are swapped, result in the bars $\alpha_1'$ and $\alpha_2'$ in Fig.~\ref{fig:barcodescurve_basic}(b). 
However, the Betti curve shown in Fig.~\ref{fig:barcodescurve_basic}(c) remains the same for both. 
	\begin{figure}%
		\centering
		\includegraphics[width=\linewidth]{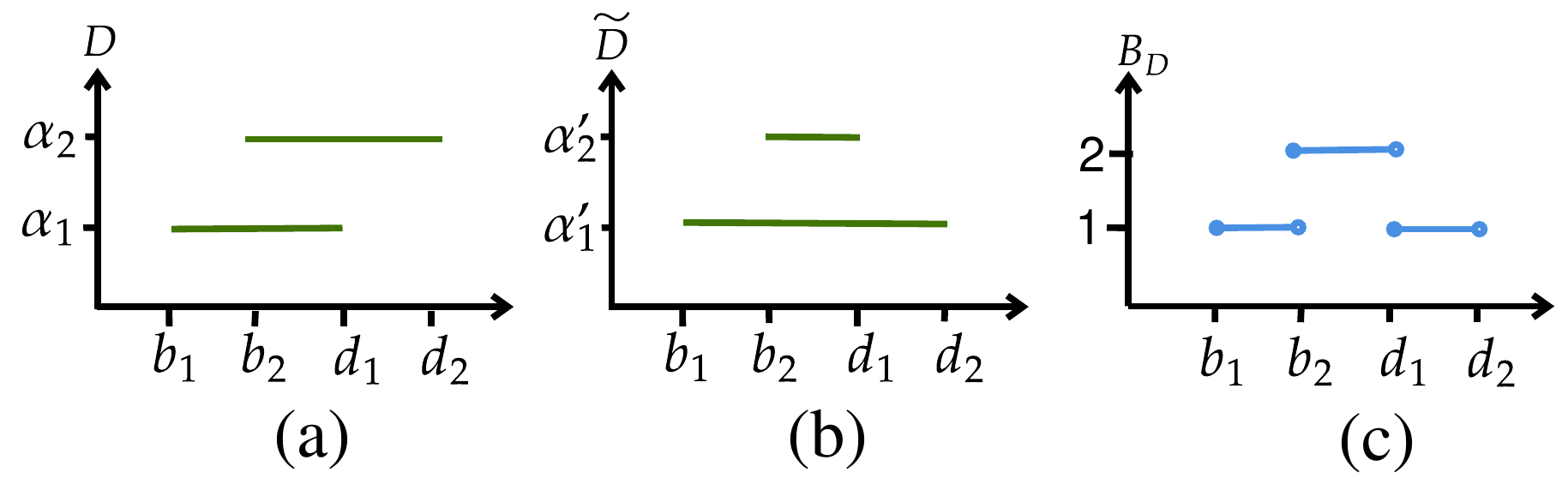}
		\caption{
		An example of two barcodes (panels (a) and (b)) with the same Betti curve (panel (c)) as constructed in Lem.~\ref{lem:removeBump}.
		}
		\label{fig:barcodescurve_basic}
	\end{figure}

We can now use the above lemma in the inductive step of the proof of the main proposition.

\begin{proof}[Proof of Proposition~\ref{prop:RelCurveWasser} ]
We will prove the statement by induction on the number of bars $k$ of a given diagram. 

 Since the case of the empty diagram is vacuously true, we will start with the base case $w=1$ for clarity. 
 This is visualized in Fig.~\ref{basecase}.
 A diagram with a single bar $(s_0,s_1)$ shown in Fig.~\ref{basecase}(a), and the Betti curve is given in Fig.~\ref{basecase}(b). 
 Then the area under the curve is $\int_{\mathbb{I}} B_{D_{1}}(s)\ ds = s_1-s_0$. 
 On the other hand, the 1-Wasserstein distance to the empty diagram, as shown in Fig.~\ref{basecase}(c), the distance is the half lifetime of the topological feature, thus  $\tfrac {1}{2}(s_0,s_1)$. 
 So,
\[
\int_{\mathbb{I}} B_{D_{1}}(s)\ ds 
= (s_1-s_0)\cdot1 = 2\cdot \frac{s_1-s_0}{2} = 2 \cdot W_1(D_{1},D_{\emptyset}) \ .
\]
    
	\begin{figure}%
		\centering
		\includegraphics[width=\linewidth]{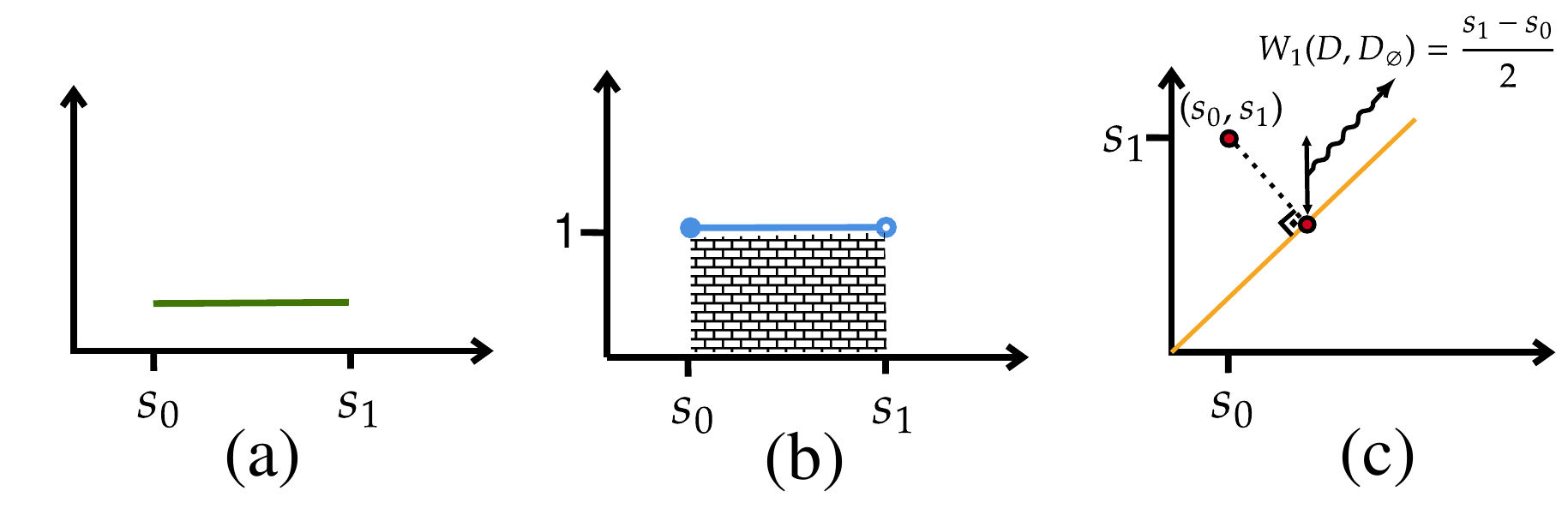}
		\caption{(a) Persistence barcodes, (b) Betti curve and (c) persistence diagram with Wasserstein distances for the example of a diagram with a single point. }\label{basecase}
	\end{figure}

Now we assume the statement is true for a diagram with $w$ bars.     
Assume that we have a persistence diagram $D$ with $w+1$ bars and let $\widetilde D$ be the diagram obtained from Lem.~\ref{lem:removeBump} with bar $\alpha_{*} = (b_*,d_*) \in \widetilde D$. 
Let $\widetilde D_w$ be the diagram $\widetilde D$ with that bar removed, i.e.~$\widetilde D_{w} \cup \{ \alpha_*\} = \widetilde D$ and note that $|D_w| = w$.
Then we have
\begin{align*}
    \int_{\mathbb{I}} B_{D}(s)\ ds  
    = & \int_{\mathbb{I}} B_{\widetilde{D}}(s)\ ds 
        &\mbox{ (Lem.~\ref{lem:removeBump})}\\    
    = &\int_{\mathbb{I}} B_{\widetilde{D}_{w}}(s)\ ds 
        + \int_{\mathbb{I}} \mathds{1}_{\alpha_*}(s)\ ds&\\
     = & 2\cdot W_1(\widetilde{D}_w, D_\emptyset)  + (\widetilde{d}_{*}-\widetilde{b}_{*})& \text{(induction)}\\
     = & 2\cdot W_1(\widetilde{D}, D_\emptyset) & \text{ (Defn.~\ref{Wasserstein_1})}\\
     = & 2\cdot W_1(D, \emptyset)&\mbox{ (from Lemma~\ref{lem:removeBump})}
\end{align*}
completing the proof.
    \end{proof}

\section{Pearson and Spearman Correlations}\label{App:correlations}
The Pearson correlation coefficient $r \in[-1,1]$ measures the linear correlation of two series $x$ and $y$. 
The Pearson correlation coefficient for two datasets is calculated as
\begin{equation}
\label{eq:Pearson}
    r=\frac{\sum_{i=1}^{n}\left(x_{i}-\bar{x}\right)\left(y_{i}-\bar{y}\right)}{\sqrt{\sum_{i=1}^{n}\left(x_{i}-\bar{x}\right)^{2}} \sqrt{\sum_{i=1}^{n}\left(y_{i}-\bar{y}\right)^{2}}} .
\end{equation}
    
    The coefficient $r$ indicates  a perfect positive or negative linear correlation when $r =1$ or $r=-1$, respectively, while $r=0$ represents no linear correlation. 
    However Pearson correlation is limited in that it can only discover linear associations. 
    
    To investigate nonlinear associations, one may consider Spearman's correlation which is the non-parametric version of the Pearson correlation.    
    Spearman's correlation coefficient $\rho\in[-1,1]$  is also calculated using Eq.~\eqref{eq:Pearson} with the ordinal ranking of the variables $x$ and $y$ instead of their numerical values. This substitution allows for detecting nonlinear correlation trends to be represented as long as the correlation is monotonic.

\section{Dynamical System Details}
\label{App:DynamicalSystem}

In this section we give details on the various dynamical systems simulated for the experiments from Sec.~\ref{sec:Experiments}. 
All simulations were performed using the python package \texttt{teaspoon} \cite{teaspoon} with default parameters other than those noted.

    \subsection{Lorenz}
    The Lorenz system consists of three ordinary differential equations referred to as Lorenz equations:
    \begin{equation*}
        \dot{x} = \sigma(y-x),\quad
        \dot{y} = x(\rho-z)-y,\quad
        \dot{z} = xy-\beta z,
    \end{equation*}
    where the  parameters $\sigma = 10,\: \beta = 8/3$  are fixed; 
    the control parameter $\rho$ is varied across 600 equally spaced values between 90 and 105;
    and the system is simulated with a sampling rate of 100 Hz for 100 seconds using the initial condition ${[x, y, z]} = [10^{-10},0,1]$. 
    After solving the system, we take only the last 20 seconds to avoid transients.
    
    \subsection{Rössler}
    
     The Rössler system is defined as follows
    	\begin{equation*}
		\dot{x} =  -y -z,\quad
		\dot{y} =  x + ay,\quad
		\dot{z} =  b+ z(x-c), 
	\end{equation*}
    where the parameters $b = 2,\: c = 4 $ are fixed; 
    the control parameter $a$ is varied across 600 equally spaced  values between 0.37 and 0.43;
    and the system is simulated with a sampling rate of 15 Hz for 1000 seconds using the initial condition ${[x, y, z]} = [-0.4,0.6,1]$. 
    After solving the system, we take only the last 170 seconds to avoid transients.
    
    \subsection{Coupled Lorenz Rössler}
    
    The Coupled Lorenz-Rössler system defined as follows
    \begin{align*}
    \dot{x_1} &=-y_{1}-z_{1}+k_{1}\left(x_{2}-x_{1}\right),  & \dot{x_2} &=\sigma\left(y_{2}-x_{2}\right), \\
    \dot{y_1} &=x_{1}+a_{2} y_{1}+k_{2}\left(y_{2}-y_{1}\right), & \dot{y_2} &=\gamma x_{2}-y_{2}-x_{2} z_{2}, \\
    \dot{z_1} &=b_{2}+z_{1}\left(x_{1}-c_{2}\right)+k_{3}\left(z_{2}-z_{1}\right), & \dot{z_2} &=x_{2} y_{2}-b_{1} z_{2},\\
    \end{align*}
    where the parameters $b_{1}=8/3,\: b_{2}=0.2,\: c_{2}=5.7,\: k_{1}=0.1,\: k_{2}=0.1$,\: $k_{3}=0.1,\: {\gamma}=28, \: \sigma=10$ are fixed; 
    the control parameter $a$ is varied across 600 equally spaced  values between 0.3 and 0.5;
    and the system is simulated with a sampling rate of 50 Hz for 500 seconds using the initial condition {$[x_1, y_1, z_1, x_2, y_2, z_2] = [0.1, 0.1, 0.1, 0, 0, 0]$}. 
    After solving the system, we take only the last 30 seconds to avoid transients.

    \subsection{Complex butterfly}
    
     The complex butterfly system is defined as follows
    	\begin{equation*}
    	\dot{x}=a(y-x), \quad
        \dot{y}=2 \operatorname{sgn}(x), \quad
        \dot{z}=|x|-1,
	\end{equation*}
    where the control parameter $a$ is varied across 600 equally spaced  values between 0.10 and 0.60;
    and the system is simulated with a sampling rate of 10 Hz for 1000 seconds using  the initial condition ${[x, y, z]} = [0.2, 0, 0]$. 
    After solving the system, we take only the last 500 seconds to avoid transients.

    \subsection{Hadley circulation}
    
     The Hadley circulation system is defined as follows
    	\begin{align*}
    		\dot{x} = & -y^2 -z^2 - ax + aF\\
    		\dot{y} = & xy -bxz-y+G\\
    		\dot{z} = & bxy+xz-z, 
    	\end{align*}
    where the parameters $b = 4,\: F = 8,\: G = 1 $ are fixed;
    the control parameter $a$ is varied across 600 equally spaced  values between 0.20 and 0.25;
    and the system is simulated with a sampling rate of 50 Hz for 500 seconds using the initial condition ${[x, y, z]} = [-10, 0, 37]$. 
    After solving the system, we take only the last 80 seconds to avoid transients.

    \subsection{Moore-Spiegel Oscilator}
    
     The Moore-Spiegel oscilator system is defined as follows
    	\begin{equation*}
    		\dot{x} =  y,\quad
    		\dot{y} =  z,\quad
    		\dot{z} =  -z-(T-{R}+Rx^{2})y-Tx, \quad
	    \end{equation*}
    where the parameter $R = 20$ is fixed;
    the control parameter $T$ is varied across 600 equally spaced  values between 7.0 and 8.0;
    and the system is simulated with a sampling rate of 100 Hz for 500 seconds using the initial condition ${[x, y, z]} = [0.2, 0.2, 0.2]$. 
    After solving the system, we take only the last 10 seconds to avoid transients.    
    
    \subsection{Halvorsens cyclically symmetric attractor}
    
     The Halvorsens cyclically symmetric attractor is defined as follows
    	\begin{align*}
    		\dot{x} = &  -ax-by-cz-y^{2}\\
    		\dot{y} = & -ay-bz-cx-z^2 \\
    		\dot{z} = & -az-bx-cy-x^{2}, 
    	\end{align*}
    where the parameters $b = 4,\: c = 4 $ are fixed;
    the control parameter $a$ is varied across 600 equally spaced  values between 1.40 and 1.85;
    and the system is simulated  with a sampling rate of 200 Hz for 200 seconds using the initial condition ${[x, y, z]} = [-5, 0, 0]$. After solving the system, we take only the last 25 seconds to avoid transients.       
    
    \subsection{Burke-Shaw attractor}
    
     The Burke-Shaw system is defined as follows
    	\begin{equation*}
    		\dot{x} =  -s(x+y),\quad
    		\dot{y} =  -y-sxz,\quad
    		\dot{z} =  sxz + V, \quad
    	\end{equation*}
    where the parameter $V = 10 $ is fixed;
    the control parameter $s$ is varied across 600 equally spaced  values between 9.0 and 13.0;
    and the system is simulated  with a sampling rate of 200 Hz for 500 seconds using the initial condition ${[x, y, z]} = [0.6,0,0]$. 
    After solving the system, we take only the last 25 seconds to avoid transients.

    \subsection{Rucklidge attractor}
    
     The Rucklidge system is defined as follows
    	\begin{equation*}
    		\dot{x} =  -kx+\lambda y-yz,\quad
    		\dot{y} =  x,\quad
    		\dot{z} =  -z + y^2, 
    	\end{equation*}
    where the parameter $\lambda = 6.7 $ is fixed; the control parameter $k$ is varied across 600 equally spaced  values between 1.0 and 1.7;
    and the system is simulated  with a sampling rate of 50 Hz for 1000 seconds using the initial condition ${[x, y, z]} = [1,0,4.5]$. 
    After solving the system, we take only the last 100 seconds to avoid transients.

    \subsection{WINDMI}
    
     The WINDMI system is defined as follows
    	\begin{equation*}
    		\dot{x} =  y,\quad
    		\dot{y} =  z,\quad
    		\dot{z} =  -az-y+b-e^x,
    	\end{equation*}
    where the parameter $b = 2.5 $ is fixed; 
    the control parameter $a$ is varied across 600 equally spaced  values between 0.7 and 1.0;
    and the system is simulated  with a sampling rate of 20 Hz for 1000 seconds using the initial condition ${[x, y, z]}= [1,0,4.5]$. 
    After solving the system, we take only the last 250 seconds to avoid transients.

\section{CROCKER plots for other dynamical systems}
\label{Appx:AllCROCKERplots}
In this section, we give an additional figure (Fig.~\ref{fig:allcrockers}) detailing similar results to those  seen in Sec.~\ref{sec:Experiments} for the remainder of the systems mentioned in Appendix~\ref{App:DynamicalSystem}.
    \begin{figure*}
        \centering
        \includegraphics[width = \textwidth]{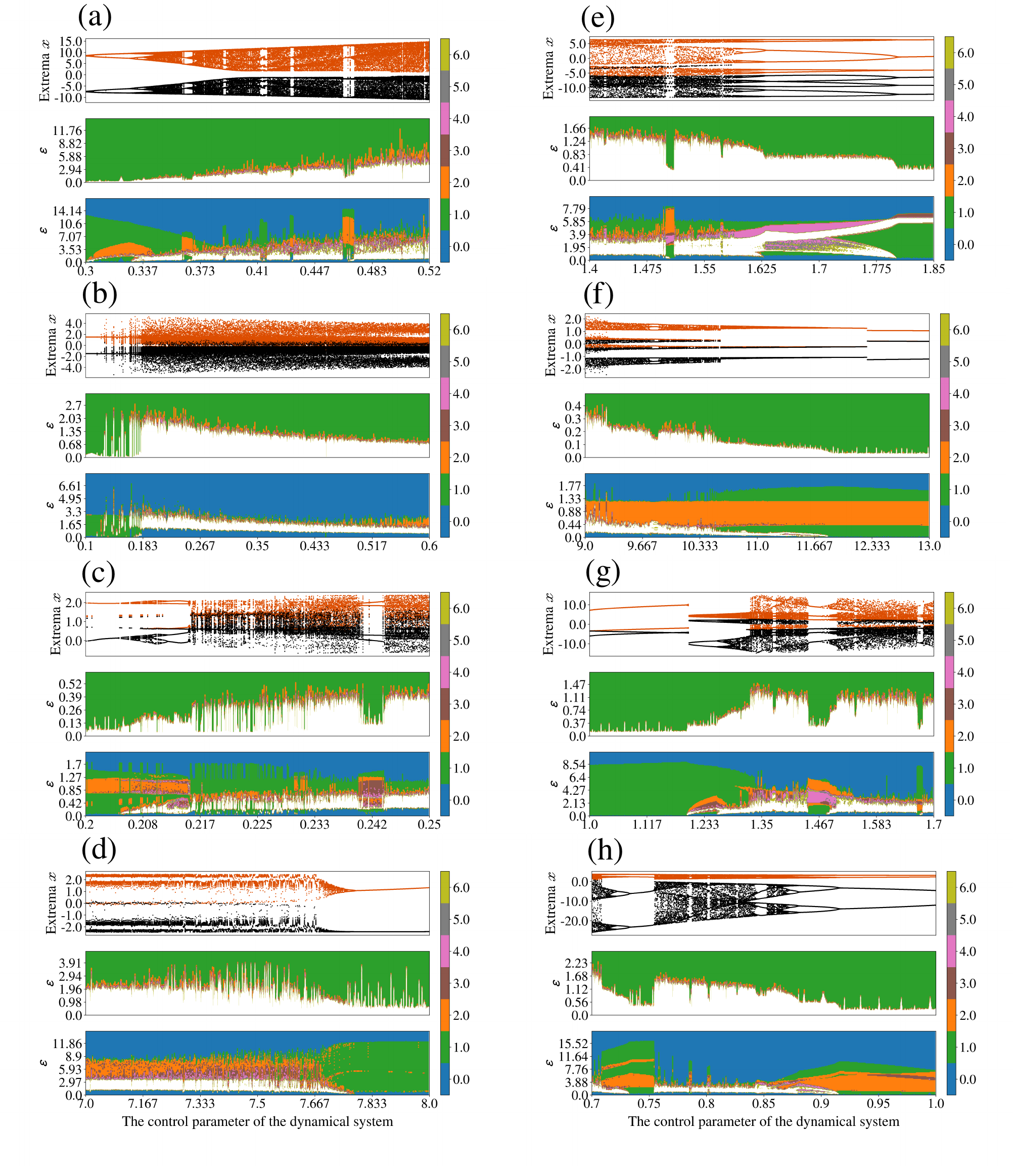}
        \caption{\revision{The bifurcation diagram and 0- and 1- dimensional CROCKER for each dynamical system in Appendix.~\ref{App:DynamicalSystem}. ((a)-Coupled Lorenz R\"{o}ssler, (b)- Complex Butterfly, (c)- Hadley circulation, (d)- Moore-Spiegel oscilator, (e)- Halvorsens cyclically symmetric attractor, (f)-Burke-Shaw attractor, (g)- Rucklidge attractor and (h)- WINDMI)}}
        \label{fig:allcrockers}
    \end{figure*}

\bibliography{article}%

\end{document}